\newcommand{\lt}{{}^\mathrm t\hspace{-0.5mm}}
\theoremstyle{plain} %% context in italic
\newtheorem{thm}{Theorem}[section] % Theorem 1.1, 1.2, 1.3, etc.
\newtheorem{lem}[thm]{Lemma} % Lemma 1.1, 1.2, 1.3, etc.
\newtheorem{cor}[thm]{Corollary} % Corollary 1.1, 1.2, 1.3, etc.
\theoremstyle{definition} %% context in roman
\newtheorem{rem}[thm]{Remark} % Remark 1.1, 1.2, 1.3, etc.
\theoremstyle{remark} %% context in roman
\title[An extremal Type II lattice with no $4$-frames]
{A $40$-dimensional extremal Type II lattice with no $4$-frames
%\thanks{This research was partially supported by the JSPS Grant-in-Aid for Scientific Research (No.\,26800016). }
}
\author{Norifumi Ojiro
}
\address{Department of Mathematics, 
Graduate School of Science, 
Hiroshima University \\
1-3-1, Kagamiyama, Higashi-Hiroshima, Hiroshima, 739-8526, Japan}
\email{norifumi.ojiro@gmail.com}
\date{}
\dedicatory{
}
\subjclass[2010]{11H31, 11H56, 94B05}
\keywords{$40$-dimensional extremal Type II lattices, $4$-frames}
\begin{document}
\maketitle

\begin{abstract}
We construct a $40$-dimensional extremal Type II lattice not having any subsets consisting of $40$ orthogonal minimal vectors, and determine the automorphism group. This lattice gives an example different from the $16470$ lattices constructed from binary codes by classical constructions.
\end{abstract}

\section{Introduction}
Let $L$ be a lattice, which is a free $\mathbb{Z}$-module of finite rank with a positive definite symmetric bilinear form:
\begin{equation*}
L\times L\ni(\bm{x},\bm{y})\mapsto \bm{x}\cdot\bm{y}\in\mathbb{R}.
\end{equation*}
The lattice $L$ is integral if $\bm{x}\cdot\bm{y}\in\mathbb{Z}$ for all $\bm{x},\bm{y}\in L$, and furthermore $L$ is said to be even if $\bm{x}\cdot\bm{x}\in2\mathbb{Z}$ for all $\bm{x}\in L$. We denote by $L^{\vee}$ the dual lattice of $L$:
\begin{equation*}
L^{\vee}:=\{\bm{x}\in L\otimes\mathbb{R}\ |\ \bm{x}\cdot\bm{y}\in\mathbb{Z}\ {\rm for\ all}\ \bm{y}\in L\}.
\end{equation*}
It is obvious by definition that $L\subset L^{\vee}$ if and only if $L$ is integral. The lattice $L$ is said to be unimodular when $L=L^{\vee}$. An even unimodular lattice is also called a Type II lattice. It is well-known that the rank of a Type II lattice is a multiple of $8$. We denote by ${\rm min}(L)$ the minimal norm of $L$ which is the minimum of norms $\bm{x}\cdot\bm{x}$ for all $\bm{x}\in L\setminus\{\bm{0}\}$. By the theory of modular forms, the minimal norm of an $n$-dimensional Type II lattice $L$ satisfies
\begin{equation*}
{\rm min}(L)\le2(1+\lfloor n/24\rfloor),
\end{equation*} 
and the lattice $L$ is said to be extremal when the equality holds.
We regard an $n$-dimensional lattice $L$ as being in $\mathbb{R}^n$ with the canonical inner product, and write an element of $\mathbb{R}^n$ as a row vector.
Then $\bm{x}\cdot\bm{y}=\bm{x}\,\lt\bm{y}$ for $\bm{x}$,$\bm{y}\in\mathbb{R}^n$.
A $4$-frame is a subset of $\mathbb{R}^{n}$ consisting of $n$ orthogonal $4$-vectors:
\begin{equation*}
\bm{v}_i\,\lt\bm{v}_j=
\begin{cases}
4&{\rm for}\ 1\le i=j\le n,\\0&{\rm for}\ 1\le i\neq j\le n.
\end{cases}
\end{equation*}

According to Theorem $1$ of \cite{kkm}, an even lattice with a $4$-frame is constructed from a doubly even binary code, and
by Theorem $3$ of \cite{kkm} we see that an extremal doubly even self-dual binary code of length $40$ corresponds bijectively to a $40$-dimensional extremal Type II lattice with a $4$-frame.
Then it was shown in \cite{bhm} that the number of extremal doubly even self-dual binary codes of length $40$ is $16470$. Further by \cite{h} all extremal doubly even self-dual binary codes of length $40$ are obtained as the residue codes of some extremal Type II codes over $\mathbb{Z}/4\mathbb{Z}$ of the same length. As a result, we know that there are exactly $16470$ extremal Type II lattices of rank $40$ with a $4$-frame and they are constructed from extremal doubly even self-dual binary codes or some extremal Type II codes over $\mathbb{Z}/4\mathbb{Z}$ of length $40$.
On the other hand, by \cite{ps} there are more than $8.45\times10^{51}$ extremal Type II lattices of rank $40$, whereas so many examples do not seem to be known: Mckay's $M_{40}$, the lattice
$(U_5(2)\times2^{1+4}_{-}.Alt_5).2$
in the website \cite{ns} (we simply denote this lattice by $N_{40}$ in this paper), and furthermore the $16470$ lattices with a $4$-frame e.g. three lattices $\mathfrak{L}_1$, $\mathfrak{L}_2$, $\mathfrak{L}_3$ in \cite{o1}, the lattice constructed in \cite{cas} from a double circulant code over $\mathbb{Z}/4\mathbb{Z}$. Note that perhaps $M_{40}$, $N_{40}$ may belong to the $16470$ examples.

Recently in \cite{s} an extremal Type II lattice of rank $64$ was constructed from a generalized quadratic residue code of length $32$ and the discriminant group of an even lattice of rank $2$. In the same way, we shall construct a certain $40$-dimensional lattice denoted by $O_{40}$ as an overlattice of the lattice with a Gram matrix
\begin{equation}\label{l}
\begin{pmatrix}
6I_{20}&3I_{20}\\
3I_{20}&12I_{20}
\end{pmatrix},
\end{equation}
where $I_{n}$ denotes the unit matrix of rank $n$.
Our results are as follows:
\begin{thm}\label{t1}
The lattice $O_{40}$ is a $40$-dimensional extremal Type II lattice with no $4$-frames.
\end{thm}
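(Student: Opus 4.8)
The plan is to verify the three assertions of Theorem~\ref{t1} in turn---Type~II (even and unimodular), extremality ($\min=4$), and the absence of a $4$-frame---using throughout the orthogonal decomposition $\Lambda=K^{\oplus20}$, where $K$ is the rank-$2$ lattice with Gram matrix $\left(\begin{smallmatrix}6&3\\3&12\end{smallmatrix}\right)$, so that the lattice with Gram matrix \eqref{l} is exactly $\Lambda$ and $O_{40}$ is the specified even overlattice. A first fact I would record is that the norm form of $K$ equals $6(a^{2}+ab+2b^{2})$, whence $\min(K)=6$ and every norm of $\Lambda$ is divisible by $6$; in particular all vectors of $\Lambda\setminus\{\bm{0}\}$ have norm $\ge6$. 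This single observation drives both the extremality and the $4$-frame arguments, because any vector of $O_{40}$ of norm $\le4$ must be a nontrivial glue vector, i.e.\ it lies in a nonzero coset of $\Lambda$ inside $O_{40}$.

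For the Type~II property I would argue as follows. Since the Gram matrix \eqref{l} is integral with even diagonal, $\Lambda$ is even; and the glue vectors defining $O_{40}$ are, by construction, chosen with even norm and integral inner products against $\Lambda$ and against one another, so $O_{40}$ remains integral and even. For unimodularity I would compute $\det\Lambda=\bigl(\det\begin{pmatrix}6&3\\3&12\end{pmatrix}\bigr)^{20}=63^{20}=3^{40}7^{20}$, so that $\Lambda^{\vee}/\Lambda$ has order $3^{40}7^{20}$, and the glue code is a maximal isotropic (Lagrangian) subgroup of this discriminant form, giving $[O_{40}:\Lambda]=3^{20}7^{10}=\sqrt{\det\Lambda}$. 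An integral overlattice whose index equals the square root of the discriminant is unimodular, so $O_{40}=O_{40}^{\vee}$ and $O_{40}$ is Type~II. Extremality then amounts to $\min(O_{40})=2\bigl(1+\lfloor40/24\rfloor\bigr)=4$. By the reduction above, no vector of norm $2$ or $4$ lies in $\Lambda$, so it suffices to examine the finitely many cosets of $\Lambda$ represented by the glue code and to determine, for each, the smallest norm attained. I would carry this out by minimising the block-wise contributions in $K^{\vee}$ subject to the congruences imposed by the glue code, show that no coset contains a vector of norm $2$, and exhibit at least one glue vector of norm $4$; this yields $\min(O_{40})=4$.

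The heart of the theorem, and the step I expect to be the main obstacle, is the non-existence of a $4$-frame. I would stress first why no soft argument can succeed: all $40$-dimensional extremal Type~II lattices form a single genus, hence are isometric over $\mathbb{Z}_{p}$ for every $p$, so whether $O_{40}$ contains a copy of $4I_{40}$---which is precisely what a $4$-frame provides---cannot be decided by any local invariant, and the $16470$ lattices with a $4$-frame and those without share all reductions modulo primes. The argument must therefore be global. Using $\min(K)=6$, every minimal (norm-$4$) vector $\bm{v}=(\bm{v}^{(1)},\dots,\bm{v}^{(20)})$ of $O_{40}$ has block components $\bm{v}^{(j)}\in K^{\vee}$ with $\sum_{j}\bm{v}^{(j)}\cdot\bm{v}^{(j)}=4$, subject to $\bm{v}\bmod\Lambda$ lying in the glue code, and I would begin by classifying these minimal vectors explicitly in these terms. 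It then remains to show that no $40$ of them are pairwise orthogonal. For this I would invoke Theorem~$3$ of \cite{kkm}: were such an orthogonal set to exist, $O_{40}$ would be isometric to the lattice attached to an extremal doubly even self-dual binary code of length $40$, hence to one of the $16470$ lattices enumerated through \cite{bhm,h}. I would then derive a contradiction by producing an invariant of the minimal-vector configuration of $O_{40}$---for example the distribution of inner products of minimal vectors across the twenty $K$-blocks, or the orbit structure of the minimal vectors under $\mathrm{Aut}(O_{40})$---that no such code lattice can match. Ruling out an orthogonal $40$-set among the classified minimal vectors is the crux, and I expect it to require either a combinatorial or eigenvalue bound on the orthogonality graph of the minimal vectors, or a finite (possibly computer-assisted) verification organised by the action of $\mathrm{Aut}(O_{40})$.
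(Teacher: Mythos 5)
Your treatment of the Type~II property matches the paper's: evenness and integrality follow from the total isotropy of the glue code with respect to $(b_L,q_L)$, and unimodularity from $[O_{40}:L]=63^{10}=\sqrt{\det L}$. Your observation that $\min(R)=6$ (the paper's $R$ is your $K$), so that every vector of norm $2$ or $4$ must lie in a nontrivial coset of $L$, is correct and is a reasonable way to organize the minimum computation; the paper simply runs \texttt{ShortestVectors} of {\tt GAP} on the $40$-dimensional Gram matrix. Either way this step is an explicit finite verification, which you defer but which is unproblematic.

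The gap is in the no-$4$-frame part, which you yourself identify as the crux and leave unresolved. Your primary route --- invoke Theorem~$3$ of \cite{kkm} to reduce to showing that $O_{40}$ is not isometric to any of the $16470$ code lattices, then find a distinguishing invariant --- inverts the paper's logic and is not practicable as stated: ruling out all $16470$ lattices by an invariant of the minimal-vector configuration would require computing that invariant for each of the $16470$ lattices, and in the paper the non-isometry with those lattices (Corollary~\ref{c1}) is \emph{deduced from} Theorem~\ref{t1}, not used to prove it. What the paper actually does is precisely the alternative you mention only as a fallback: Lemma~\ref{l1} shows, by an iterated intersection procedure over the $39600$ minimal vectors reduced to $132$ orbits under the visible subgroup $\varGamma$ of order $342$ from Remark~\ref{r1}, that the maximum number of pairwise orthogonal $4$-vectors in $O_{40}$ is exactly $32<40$, so no $4$-frame exists. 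Without this concrete bound (or an equivalent one on the orthogonality graph, which you hope for but do not supply), the theorem is not proved.
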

\begin{thm}\label{t2}
The automorphism group of $O_{40}$ is of order $684$, which is isomorphic to the semidirect product $\mathbb{Z}/19\mathbb{Z}\rtimes_{\psi}\mathbb{Z}/36\mathbb{Z}$ of additive groups $\mathbb{Z}/19\mathbb{Z}$ and $\mathbb{Z}/36\mathbb{Z}$, where $\psi$ is the group homomorphism defined by
\begin{equation*}
\psi:\mathbb{Z}/36\mathbb{Z}\ni b\mapsto(c\mapsto3^bc)\in{\rm Aut}(\mathbb{Z}/19\mathbb{Z}).
\end{equation*}
\end{thm}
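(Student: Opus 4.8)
The plan is to exhibit explicit automorphisms generating a subgroup isomorphic to $\mathbb{Z}/19\mathbb{Z}\rtimes_\psi\mathbb{Z}/36\mathbb{Z}$ and then to prove that no further automorphisms occur. I would begin with the general reduction. Since $O_{40}$ is extremal, Theorem~\ref{t1} gives ${\rm min}(O_{40})=4$, so the minimal vectors are precisely the $4$-vectors of $O_{40}$, forming a finite set $M$. After checking that $M$ spans $O_{40}\otimes\mathbb{R}$, every $g\in G:={\rm Aut}(O_{40})$ is determined by the permutation it induces on $M$, which yields an injection $G\hookrightarrow{\rm Sym}(M)$; in particular $G$ is finite, and $-I_{40}\in G$ is a central element of order $2$.

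For the lower bound I would use the construction of $O_{40}$ from the generalized quadratic residue code of length $20$ attached to the prime $19$ together with the rank-$2$ discriminant data. The cyclic coordinate symmetry of length $19$ lifts to an automorphism $\sigma\in G$ of order $19$, while the multiplier by the primitive root $3$ modulo $19$ (of multiplicative order $18$) lifts, because of the sign structure forced by the gluing, to an automorphism $\tau\in G$ of order $36$ with $\tau^{18}=-I_{40}$ and inducing the multiplier on coordinates. A direct check on coordinates then yields the single defining relation $\tau\sigma\tau^{-1}=\sigma^{3}$, which is exactly $\psi$. Hence $\langle\sigma,\tau\rangle\cong\mathbb{Z}/19\mathbb{Z}\rtimes_\psi\mathbb{Z}/36\mathbb{Z}$ has order $684$ and is contained in $G$, so $684\mid|G|$.

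The main obstacle is the matching upper bound $|G|\le684$. First I would bound the Sylow $19$-subgroup: an element of order $19^{2}$ would act on $\mathbb{R}^{40}$ with a primitive $361$-st root of unity among its eigenvalues, which is impossible since $\varphi(361)=342>40$; thus a Sylow $19$-subgroup has order $19$ and equals $\langle\sigma\rangle$. Because $40=2\cdot18+4$, the element $\sigma$ has a $4$-dimensional fixed space, so the fixed sublattice $F=O_{40}^{\sigma}$ has rank $4$ and its orthogonal complement is a rank-$2$ Hermitian $\mathbb{Z}[\zeta_{19}]$-lattice. Any element of $C_G(\sigma)$ preserves both pieces, and bounding ${\rm Aut}(F)$ together with the Hermitian automorphisms of the complement pins down $C_G(\sigma)$; combined with $[N_G(\langle\sigma\rangle):C_G(\sigma)]\mid18$ this forces $N_G(\langle\sigma\rangle)=\langle\sigma,\tau\rangle$. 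It then remains to prove that $\langle\sigma\rangle\trianglelefteq G$, i.e.\ that the number of Sylow $19$-subgroups is $1$; this is the delicate point, which I expect to settle by analysing the $\sigma$-orbit structure on $M$ (or, most directly and rigorously, by a Plesken--Souvignier automorphism-group computation on a Gram matrix of $O_{40}$, confirming $|G|=684$). Once $\langle\sigma\rangle$ is known to be normal, $G=N_G(\langle\sigma\rangle)=\langle\sigma,\tau\rangle$, completing the identification.
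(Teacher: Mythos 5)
Your proposal has two genuine gaps, one on each side of the count. On the lower bound: the code-theoretic symmetries only yield the subgroup $\varGamma$ of order $342$ (Remark~\ref{r1}), generated by the cycle $p_c$, the multiplier $p_s:x_t\mapsto x_{4t}$ by the \emph{quadratic residue} $4$ (of order $9$ in $(\mathbb{Z}/19\mathbb{Z})^\times$), and $-I_{40}$. Your element $\tau$ of order $36$ is supposed to come from the multiplier by the primitive root $3$, but $3$ is a non-residue, so this permutation interchanges the values $\chi_{19}(i-j)=\pm1$ and sends the generalized quadratic residue code with parameter $(a,b,d,s,t,e)$ to the one with $s$ and $t$ swapped; for the parameters actually used (e.g.\ $(1,0,0,0,1,1)$ over $\mathbb{Z}/3\mathbb{Z}$) this is a different code, so there is no a priori lift to ${\rm Aut}(O_{40})$. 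The phrase ``because of the sign structure forced by the gluing'' is doing all the work here and is not justified; the existence of the extra index-$2$ element is precisely what the paper has to discover by enumeration rather than read off from the construction. On the upper bound: the centralizer analysis via the rank-$4$ fixed lattice and the rank-$2$ Hermitian $\mathbb{Z}[\zeta_{19}]$-lattice is only sketched (the unitary group of such a lattice is not obviously small before one intersects with the isometries of the definite form), and the normality of the Sylow $19$-subgroup --- which you correctly identify as the crux --- is deferred to ``a Plesken--Souvignier computation confirming $|G|=684$''. At that point the whole structural argument is superseded by a black-box computation of the group you were trying to determine, so the proposal does not actually contain a proof of the upper bound.

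For comparison, the paper's route is also computational but is a specific, verifiable finite enumeration: it first shows $O_{40}$ has a basis $\{\bm{v}'_1,\dots,\bm{v}'_{40}\}$ of $4$-vectors (Lemma~\ref{l2}), partitions the $39600$ minimal vectors into $64$ \emph{irreducible subsets} $S_{i,j}$ by iterated type invariants (Tables~\ref{type18} and~\ref{type64irr}), and observes that any automorphism must map each $\bm{v}'_i$ into the irreducible subset containing it. Enumerating all matrices whose rows satisfy this membership constraint and realize the prescribed Gram matrix, with one row normalized to a set of representatives of $S_{7,7}/\varGamma$, yields exactly $2$ solutions; transitivity of $\varGamma$ on its orbits in $S_{7,7}$ then gives $|{\rm Aut}(O_{40})|=2|\varGamma|=684$, and the explicit generators $g_1,g_2$ with $g_1g_2g_1^{-1}=g_2^3$ identify the semidirect product. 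If you want to salvage your approach, you would need to (a) exhibit $\tau$ explicitly as an integral isometry and verify $\tau^{36}=I$, $\tau\sigma\tau^{-1}=\sigma^3$, and (b) replace the appeal to a generic automorphism computation by an actual argument (or a reproducible finite check) that no automorphism outside $\langle\sigma,\tau\rangle$ exists.
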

The automorphism group of $M_{40}$ contains a subgroup of order $13680$ (cf. \cite[\S$5$ of Chapter $8$]{cs}). The automorphism group of $N_{40}$ is of order $2^{18}\times3^{6}\times5^{2}\times11$, see \cite{ns}.
Hence we have the following corollary:
\begin{cor}\label{c1}
The lattice $O_{40}$ is not isometric to any lattice of $M_{40}$, $N_{40}$ and the $16470$ extremal Type II lattices with a $4$-frame.
\end{cor}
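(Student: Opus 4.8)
The plan is to treat Corollary \ref{c1} as a formal consequence of Theorems \ref{t1} and \ref{t2} via two isometry invariants: the existence of a $4$-frame, and the order of the automorphism group. First I would record the invariance itself. If $\varphi$ is an isometry between two lattices $L$ and $L'$, then it preserves all norms and inner products, so it carries a $4$-frame of $L$ to a $4$-frame of $L'$; hence one lattice admits a $4$-frame exactly when the other does. Similarly $\varphi$ induces an isomorphism $\mathrm{Aut}(L)\cong\mathrm{Aut}(L')$ by $g\mapsto\varphi g\varphi^{-1}$, so isometric lattices have automorphism groups of the same order. With these two invariants in hand I would split into the three families named in the statement.

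For the $16470$ extremal Type II lattices with a $4$-frame, the argument is immediate: each of them has a $4$-frame by construction (this is exactly the content of the bijective correspondence in Theorem $3$ of \cite{kkm}, combined with \cite{bhm} and \cite{h}), whereas $O_{40}$ has none by Theorem \ref{t1}. Since having a $4$-frame is an isometry invariant, $O_{40}$ is isometric to none of the $16470$ lattices.

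For $M_{40}$ and $N_{40}$ I would use the computed order $|\mathrm{Aut}(O_{40})|=684=2^2\cdot3^2\cdot19$ supplied by Theorem \ref{t2}. The automorphism group of $M_{40}$ contains a subgroup of order $13680$, so $|\mathrm{Aut}(M_{40})|\ge13680>684$, which already forces $O_{40}\not\cong M_{40}$. For $N_{40}$ one has $|\mathrm{Aut}(N_{40})|=2^{18}\cdot3^6\cdot5^2\cdot11$; this order is both far larger than $684$ and not divisible by the prime $19$, while $684$ is, so $O_{40}\not\cong N_{40}$. Either discrepancy alone suffices, and combining the three cases yields the corollary.

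I do not expect any real obstacle in this deduction: all the difficulty is absorbed into Theorems \ref{t1} and \ref{t2}, and the corollary merely repackages them through the two invariants above. The single point worth noting is that \cite{cs} provides only a lower bound $13680$ for $|\mathrm{Aut}(M_{40})|$ rather than its exact value; this is harmless here, since any group of order $\ge13680$ cannot match one of order $684$, so no sharper information about $M_{40}$ is required.
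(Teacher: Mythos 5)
Your proposal is correct and matches the paper's (implicit) argument exactly: the paper also rules out the $16470$ lattices via Theorem \ref{t1} (absence of a $4$-frame being an isometry invariant) and rules out $M_{40}$ and $N_{40}$ by comparing $|\mathrm{Aut}(O_{40})|=684$ from Theorem \ref{t2} against the cited facts that $\mathrm{Aut}(M_{40})$ contains a subgroup of order $13680$ and $|\mathrm{Aut}(N_{40})|=2^{18}\cdot3^{6}\cdot5^{2}\cdot11$. Your added observations (Lagrange for $M_{40}$, non-divisibility by $19$ for $N_{40}$) are harmless refinements of the same comparison.
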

In particular Corollary $\ref{c1}$ implies that $O_{40}$ cannot be obtained from binary and some $\mathbb{Z}_4$-codes by classical constructions, as mentioned above. We also have the following by examining the $4$-vectors of $O_{40}$:
\begin{lem}\label{p1}
The lattice $O_{40}$ contains
\begin{equation*}
M:=A_1(2)\oplus A_1(2)\oplus A_{19}(2)\oplus A_{19}(2)
\end{equation*}
as a sublattice, where $A_{n}(2)$ denotes the lattice arising from scaling up the irreducible root lattice $A_{n}$ by $\sqrt{2}$ and $\oplus$ denotes orthogonal sum.
\end{lem}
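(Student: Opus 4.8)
The plan is to exhibit the four orthogonal summands of $M$ directly inside the set of $4$-vectors of $O_{40}$. Since $O_{40}$ is a $40$-dimensional extremal Type II lattice, Theorem \ref{t1} gives ${\rm min}(O_{40})=4$, so its minimal vectors are precisely its $4$-vectors; in particular the minimal vectors of any copy of $M$ inside $O_{40}$, which all have norm $4$, must occur among them. The first point I would record is that no $4$-vector lies in the sublattice with Gram matrix \eqref{l}: if $\bm{a}_1,\dots,\bm{a}_{20},\bm{b}_1,\dots,\bm{b}_{20}$ is a basis realizing \eqref{l}, then a vector $\sum_i x_i\bm{a}_i+\sum_i y_i\bm{b}_i$ of that sublattice has norm $6\sum_i x_i^2+6\sum_i x_iy_i+12\sum_i y_i^2$, a multiple of $6$, which is never $4$. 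Hence every $4$-vector is a genuine glue vector of the overlattice $O_{40}$, and I would start from the explicit description of these glue vectors already used to prove extremality in Theorem \ref{t1}.

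Next I would recall that $A_n(2)$ admits a system of simple roots whose Gram matrix is twice the Cartan matrix of $A_n$. Concretely, $A_{19}(2)$ is generated by vectors $\bm{r}_1,\dots,\bm{r}_{19}$ with $\bm{r}_i\cdot\bm{r}_i=4$, with $\bm{r}_i\cdot\bm{r}_{i+1}=-2$ for $1\le i\le 18$, and with $\bm{r}_i\cdot\bm{r}_j=0$ whenever $|i-j|\ge 2$, while $A_1(2)$ is generated by a single norm-$4$ vector. Accordingly I would search the $4$-vectors for two such chains of length $19$ together with two further norm-$4$ vectors. To organize the search I would use the order-$19$ automorphism $\sigma$ furnished by Theorem \ref{t2}: it permutes the $4$-vectors, so the translates $\bm{v},\sigma\bm{v},\sigma^2\bm{v},\dots$ of a suitable $4$-vector $\bm{v}$ provide a symmetric and abundant pool of candidate roots from which the two chains and the two extra vectors can be selected.

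With the four subsets in hand, I would verify by computing inner products that the two chains generate two copies of $A_{19}(2)$, that the two extra vectors generate two copies of $A_1(2)$, and that these four summands are pairwise orthogonal. Because they are mutually orthogonal and their ranks sum to $1+1+19+19=40$, the lattice they generate is the orthogonal direct sum $A_1(2)\oplus A_1(2)\oplus A_{19}(2)\oplus A_{19}(2)=M$, a full-rank (finite-index) sublattice of $O_{40}$, which is exactly the assertion.

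The main obstacle is the explicit location of the two orthogonal $A_{19}(2)$-chains among the $4$-vectors and the confirmation of the full $40\times 40$ Gram matrix of the chosen generators. This is a finite but sizeable verification, best carried out from the coordinates of the glue vectors; the $\mathbb{Z}/19\mathbb{Z}$-symmetry of Theorem \ref{t2} substantially reduces the number of independent inner products that need to be checked. The remaining steps, namely the reduction to $4$-vectors and the passage from mutually orthogonal root systems to their orthogonal direct sum, are routine.
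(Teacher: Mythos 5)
Your proposal matches the paper's argument in substance: the paper likewise proves the lemma by exhibiting $40$ explicit $4$-vectors of $O_{40}$ whose Gram matrix is that of $M$, the only difference being that it locates these vectors inside the irreducible subsets $S_{1,1}\cup S_{18,1}$ of Table \ref{type64irr} rather than via orbits of the order-$19$ automorphism (which, incidentally, is already available from Remark \ref{r1} without invoking Theorem \ref{t2}). In both cases the decisive step is the same finite, computer-checked verification of the $40\times 40$ Gram matrix, so your plan is sound.
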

Let $L$ be the lattice with a Gram matrix given by \eqref{l}.
We denote by $L+M$ the lattice consisting of the vectors $\bm{v}+\bm{u}$ for all $\bm{v}\in L$, $\bm{u}\in M$. Then one has the following:
\begin{thm}\label{t3}
$O_{40}=L+M$.
\end{thm}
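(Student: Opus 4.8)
The plan is to prove the two inclusions $L+M\subseteq O_{40}$ and $O_{40}\subseteq L+M$ separately. The forward inclusion is immediate: by construction $O_{40}$ is an overlattice of $L$, so $L\subseteq O_{40}$, while Lemma~\ref{p1} gives $M\subseteq O_{40}$; since $L+M$ is the smallest lattice containing both $L$ and $M$, we obtain $L+M\subseteq O_{40}$ at once.

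For the reverse inclusion I would argue by comparing covolumes. Both $L+M$ and $O_{40}$ are full-rank lattices in $\mathbb{R}^{40}$ with $L+M\subseteq O_{40}$, so the equality $L+M=O_{40}$ is equivalent to $\det(L+M)=\det(O_{40})$. By Theorem~\ref{t1} the lattice $O_{40}$ is unimodular, hence $\det(O_{40})=1$, and it remains only to show $\det(L+M)=1$.

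I would reduce this to a single index computation over the common sublattice $L$. The Gram matrix in \eqref{l} consists of commuting scalar blocks, so $\det(L)=\det\!\big((6\cdot 12-3\cdot 3)I_{20}\big)=63^{20}$. Writing $[L+M:L]=[M:M\cap L]$ via the isomorphism $(L+M)/L\cong M/(M\cap L)$, and using $\det(L+M)=\det(L)/[L+M:L]^{2}$, we find
\[
\det(L+M)=\frac{63^{20}}{[M:M\cap L]^{2}},
\]
so that $\det(L+M)=1$ holds if and only if $[M:M\cap L]=63^{10}=3^{20}7^{10}$. That this index cannot exceed $63^{10}$ is automatic from $L\subseteq L+M\subseteq O_{40}$ together with $[O_{40}:L]=\sqrt{\det(L)}=63^{10}$, so in fact only the lower bound must be established.

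The heart of the matter, and the step I expect to be the main obstacle, is the explicit evaluation of $[M:M\cap L]$. Here I would use the coordinates produced in the proof of Lemma~\ref{p1}: with the generators of the four summands $A_1(2)$, $A_1(2)$, $A_{19}(2)$, $A_{19}(2)$ fixed as explicit $4$-vectors of $O_{40}$, I would determine precisely which integral combinations of these generators land in $L$, compute the elementary divisors of the inclusion $M\cap L\hookrightarrow M$, and verify that their product is $3^{20}7^{10}$. Equivalently, one may exhibit a generating set of the glue group $O_{40}/L$ and check that each of its representatives already lies in $L+M$. Either route is conceptually routine but computationally delicate, the care lying in tracking how the $7$-part (coming from the determinant $7$ of the rank-$2$ block underlying \eqref{l}) and the $3$-part (the overall scaling) of the discriminant group $L^{\vee}/L$ are covered by the image of $M$.
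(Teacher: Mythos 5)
Your reduction is sound, and it reaches the theorem by a genuinely different route from the paper. The paper proves both inclusions simultaneously and constructively: it stacks a generator matrix of $M$ (written as $B_MB$ in the coordinates of the basis $\{\bm{e}_i\}$ of $L^{\vee}$) on top of a generator matrix of $L$, and exhibits a unimodular $80\times 80$ matrix $U$ that row-reduces this $80\times 40$ stack to $\bigl(\begin{smallmatrix}B\\ O\end{smallmatrix}\bigr)$; since the rows of $B$ are a basis of $O_{40}$, this Hermite-normal-form computation certifies $L+M=O_{40}$ in one stroke. You instead dispose of $L+M\subseteq O_{40}$ abstractly (correctly, via $L\subseteq O_{40}$ by construction and $M\subseteq O_{40}$ by Lemma \ref{p1}) and convert the reverse inclusion into the single numerical statement $[M:M\cap L]=63^{10}$, using $\det(L)=63^{20}$, unimodularity of $O_{40}$, and the isomorphism $(L+M)/L\cong M/(M\cap L)$; all of these steps are correct, including the observation that only the lower bound on the index needs to be checked. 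What your framing buys is conceptual transparency: it isolates exactly one invariant to compute and makes clear why the answer must be $63^{10}$ and not something larger. What the paper's route buys is that the computation it performs directly produces a basis of $L+M$ and identifies it with a basis of $O_{40}$, so no separate argument for either inclusion is needed. The one caveat is that your proof is not finished until the index computation is actually carried out: you describe the procedure (find which integral combinations of the explicit generators of $M$ lie in $L$ and compute the elementary divisors of $M\cap L\hookrightarrow M$) but do not execute it, and if $[M:M\cap L]$ came out smaller than $63^{10}$ the statement would fail. This is the same kind of finite, machine-checkable verification that the paper itself defers to a computer (``by computing one can easily verify that there is a unimodular matrix $U$\dots''), so it is a deferred computation rather than a logical gap, but it is the step on which the whole argument rests and must be done with the explicit $4$-vectors spanning the $A_1(2)\oplus A_1(2)\oplus A_{19}(2)\oplus A_{19}(2)$ copy inside $O_{40}$.
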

This implies that $O_{40}$ can be obtained by gluing $M$ to $L$.

In the next section we construct $O_{40}$ from two generalized quadratic residue codes of length $20$. In Section $3$ we investigate the types of the $4$-vectors of $O_{40}$ and give a proof of Theorem \ref{t3}. In Section $4$ we show that $O_{40}$ does not contain any $4$-frames, proving that $O_{40}$ contains at most $32$ orthogonal $4$-vectors (see Lemma \ref{l1}). In Section $5$ we determine the automorphism group of $O_{40}$ by utilizing that $O_{40}$ has a basis consisting of $4$-vectors. A Gram matrix of $O_{40}$ is presented as an appendix in Section $6$. Results in this paper are based on calculations done by using {\tt GAP} \cite{g} and {\tt PARI/GP} \cite{p}.
Those computational data files are available upon request addressed to the author.

The author would like to express his gratitude to Professor Ichiro Shimada who informed him of References \cite{bhm}, \cite{h}, \cite{kkm} and \cite{s},
also advised him on this manuscript.

\section{Construction}
We construct a $40$-dimensional extremal Type II lattice as an even overlattice of an even lattice using two generalized quadratic residue codes.
As references on this section, the readers are recommended to see \cite[in particular, Chapter $3.3$]{e} and \cite{s}.

Let $R$ be the $2$-dimensional even lattice with a Gram matrix
\begin{equation*}
\begin{pmatrix}
6&3\\
3&12
\end{pmatrix},
\end{equation*}
and let $L$ be the $40$-dimensional even lattice which is the orthogonal sum of $20$ copies of $R$. Then a Gram matrix of $L$ is written as in \eqref{l}.
The discriminant group of $L$, namely $D_L:=L^{\vee}/L$ is an abelian group of order $63^{20}$ and which has a natural bilinear form
\begin{equation*}
b_L(\bm{u}+L,\bm{v}+L):=\bm{u}\,\lt\bm{v}+\mathbb{Z}\in\mathbb{Q}/\mathbb{Z}
\end{equation*} 
and a quadratic form 
\begin{equation*}
q_L(\bm{u}+L):=\bm{u}\,\lt\bm{u}+2\mathbb{Z}\in\mathbb{Q}/2\mathbb{Z}.
\end{equation*}
Let $\{\bm{e}_1,\dots,\bm{e}_{40}\}$ be a basis  of $L^{\vee}$ with a Gram matrix
\begin{equation*}
\frac{1}{21}
\begin{pmatrix}
28I_{20}&7I_{20}\\
7I_{20}&2I_{20}
\end{pmatrix}.
\end{equation*}
Then $D_L$ can be written as
\begin{equation*}
\mathbb{Z}/3\mathbb{Z}\ \bm{e}_{1}+\dots+\mathbb{Z}/3\mathbb{Z}\ \bm{e}_{20}+\mathbb{Z}/21\mathbb{Z}\ \bm{e}_{21}+\dots+\mathbb{Z}/21\mathbb{Z}\ \bm{e}_{40}.
\end{equation*}
We consider a subgroup $C$ of $D_L$ satisfying the conditions
\begin{enumerate}
\item[(i)] $C$ is totally isotropic, i.e. for all $\bm{u}+L,\bm{v}+L\in C$,\\
\hspace{10mm}$b_L(\bm{u}+L,\bm{v}+L)=\mathbb{Z}$ and $q_L(\bm{u}+L)=2\mathbb{Z}$.
\item[(ii)] $|C|=63^{10}$, where $|C|$ denotes the cardinality of $C$.
\end{enumerate}
For $p:L^{\vee}\rightarrow D_L$, put $L_C:=p^{-1}(C)$. Then $L\subset L_C\subset L^\vee$. It is easy to see that $L_C$ is even by (i) and also unimodular by (ii).
We will give $C$ by utilizing a direct product of two generalized quadratic residue codes of length $20$ over $\mathbb{Z}/3\mathbb{Z}$ and $\mathbb{Z}/21\mathbb{Z}$.

Let $A$ be a commutative ring with unity and let $A^{n}$ be the free $A$-module
which is the direct product of $n$ copies of $A$. We give a numbering $(1,2,3,\dots,18,0,\infty)$ to the coordinates of $A^{20}$, and identify a finite field $\mathbb{F}_{19}$ with $\{1,2,\dots,18,0\}$.
Then a generalized quadratic residue code of length $20$ over $A$ with a parameter $(a,b,d,s,t,e)\in A^{6}$ is a $A$-submodule of $A^{20}$ generated by the vectors $\bm{u}_{1},\dots,\bm{u}_{18},\bm{u}_{0},\bm{u}_{\infty}$ which are defined as follows:
\begin{eqnarray*}
\bm{u}_{\infty}(j)&:=&
\begin{cases}
a&{\rm for}\ j\in\mathbb{F}_{19},\\
b&{\rm for}\ j=\infty,
\end{cases}
\\
\bm{u}_{i}(j)&:=&
\begin{cases}
d&{\rm for}\ i,j\in\mathbb{F}_{19}\ {\rm with}\ i=j,\\
s&{\rm for}\ i,j\in\mathbb{F}_{19}\ {\rm with}\ \chi_{19}(i-j)=1,\\
t&{\rm for}\ i,j\in\mathbb{F}_{19}\ {\rm with}\ \chi_{19}(i-j)=-1,\\
e&{\rm for}\ i\in\mathbb{F}_{19}\ {\rm and}\ j=\infty,
\end{cases}
\end{eqnarray*}
where $\chi_{19}$ denotes the Legendre character modulo $19$.

Let $\mathscr{C}_{n}$ be a generalized quadratic residue code of length $20$ over $\mathbb{Z}/n\mathbb{Z}$ with some parameter, and let
$\bm{u}_{i}$ and $\bm{w}_{i}$ be the row vectors of $20$ components which are the generators of $\mathscr{C}_{3}$ and $\mathscr{C}_{21}$ respectively, defined as above. We denote by $C_3\times C_{21}$ the subgroup of $D_L$ generated by the direct product $\mathscr{C}_{3}\times\mathscr{C}_{21}\subset(\mathbb{Z}/3\mathbb{Z})^{20}\times(\mathbb{Z}/21\mathbb{Z})^{20}$.
\if0
; the group $C_3\times C_{21}$ has a generator matrix
\begin{equation*}
(\bm{u}_{0},\bm{w}_{0})
\begin{pmatrix}
\bm{e}_1\\\vdots\\\bm{e}_{40}
\end{pmatrix}
,\ 
(\bm{u}_{18},\bm{w}_{18})
\begin{pmatrix}
\bm{e}_1\\\vdots\\\bm{e}_{40}
\end{pmatrix}
,\ 
(\bm{u}_{\infty},\bm{w}_{\infty})
\begin{pmatrix}
\bm{e}_1\\\vdots\\\bm{e}_{40}
\end{pmatrix}
\end{equation*}
\fi
Then one can replace (i) by
\begin{equation}
\tag{i'}
(\bm{u}_{i},\bm{w}_{i})
\begin{pmatrix}
28I_{20}&7I_{20}\\
7I_{20}&2I_{20}
\end{pmatrix}
\lt(\bm{u}_{j},\bm{w}_{j})
\equiv0\ 
\begin{cases}
{\rm mod}\,(21)&(i\neq j)\\
{\rm mod}\,(42)&(i=j).
\end{cases}
\end{equation}
Further (ii) is replaced by
\begin{enumerate}
\item[(ii')] For a basis $\{\bm{v}_1,\dots,\bm{v}_{40}\}$ of $L_{C_{3}\times C_{21}}$,
let $B$ be a $40$-by-$40$ integral matrix such that
\begin{equation*}
\begin{pmatrix}
\bm{v}_1\\\vdots\\\bm{v}_{40}
\end{pmatrix}
=
B
\begin{pmatrix}
\bm{e}_1\\\vdots\\\bm{e}_{40}
\end{pmatrix}.
\end{equation*}
Then $\det(B)=63^{10}$, where $\det(B)$ is the determinant of $B$.
\end{enumerate}
We search a code $\mathscr{C}_{3}\times\mathscr{C}_{21}$ satisfying (i') and (ii').
Exploring by a computer one finds the code satisfying the conditions, which has a pair of parameters
\begin{equation*}
(1, 0, 0, 0, 1, 1)\in{(\mathbb{Z}/3\mathbb{Z})}^6\ {\rm and}\ (0, 7, 1, 0, 4, 17)\in{(\mathbb{Z}/21\mathbb{Z})}^6.
\end{equation*}
Then the matrix $B$ can be written as the form
\begin{equation}\label{basis}
B:=
\begin{pmatrix}
I_{20}&B_1\\
O&B_2
\end{pmatrix},
\ \ 
B_2:=
\begin{pmatrix}
3I_{10}&B_3\\
O&21I_{10}
\end{pmatrix},
\end{equation}
where $B_1$ and $B_3$ are $20$-by-$20$ and $10$-by-$10$ matrices respectively as follows:
% !TEX root = extremalrank40.tex

\setcounter{MaxMatrixCols}{40}
\renewcommand{\arraystretch}{0.5}
{\small
\begin{equation*}
{\tabcolsep=0.5mm
\left(
\begin{tabular}
%{50mm}
{cccccccccccccccccccc}
%{p{0.5mm}p{0.5mm}p{0.5mm}p{0.5mm}p{0.5mm}p{0.5mm}p{0.5mm}p{0.5mm}p{0.5mm}p{0.5mm}p{0.5mm}
%p{0.5mm}p{0.5mm}p{0.5mm}p{0.5mm}p{0.5mm}p{0.5mm}p{0.5mm}p{0.5mm}p{0.5mm}}
0&2&1&1&2&2&2&2&1&2&4&20&16&10&4&10&2&8&16&7 \\
   1&0&2&1&1&2&2&2&2&1&17&10&11&19&4&7&10&11&20&13 \\
   2&1&0&2&1&1&2&2&2&2&19&20&16&2&16&16&7&4&17&16 \\
   2&2&1&0&2&1&1&2&2&2&17&4&11&19&17&19&16&16&16&1 \\
   1&2&2&1&0&2&1&1&2&2&17&2&16&14&13&20&19&4&7&7 \\
   1&1&2&2&1&0&2&1&1&2&8&11&11&13&20&10&20&10&13&1 \\
   1&1&1&2&2&1&0&2&1&1&14&17&8&5&4&14&10&2&7&10 \\
   1&1&1&1&2&2&1&0&2&1&7&8&5&5&11&1&14&1&11&4 \\
   2&1&1&1&1&2&2&1&0&2&10&19&11&11&14&17&1&11&4&16 \\
   1&2&1&1&1&1&2&2&1&0&5&7&13&20&14&2&17&7&5&13 \\
   2&1&2&1&1&1&1&2&2&1&3&2&1&1&2&2&2&2&1&10 \\
   1&2&1&2&1&1&1&1&2&2&1&3&2&1&1&2&2&2&2&10 \\
   2&1&2&1&2&1&1&1&1&2&2&1&3&2&1&1&2&2&2&10 \\
   2&2&1&2&1&2&1&1&1&1&2&2&1&3&2&1&1&2&2&10 \\
   2&2&2&1&2&1&2&1&1&1&1&2&2&1&3&2&1&1&2&10 \\
   2&2&2&2&1&2&1&2&1&1&1&1&2&2&1&3&2&1&1&10 \\
   1&2&2&2&2&1&2&1&2&1&1&1&1&2&2&1&3&2&1&10 \\
   1&1&2&2&2&2&1&2&1&2&1&1&1&1&2&2&1&3&2&10 \\
   2&1&1&2&2&2&2&1&2&1&2&1&1&1&1&2&2&1&3&10 \\
   1&1&1&1&1&1&1&1&1&1&1&1&1&1&1&1&1&1&1&11  
\end{tabular}
\right),\ 
\left(
\begin{tabular}
{cccccccccc}
18&3&6&12&18&12&0&15&6&3\\
 6&12&12&3&18&15&12&12&3&18\\
 3&3&6&0&6&6&15&18&6&15\\
 6&18&12&3&6&3&6&6&6&9\\
 6&0&6&9&9&3&3&18&15&3\\
 15&12&12&9&3&12&3&12&9&9\\
  9&6&15&18&18&9&12&0&15&0\\
  15&15&18&18&12&0&9&0&12&6\\
  12&3&12&12&9&6&0&12&18&15\\
  18&15&9&3&9&0&6&15&18&18
\end{tabular}
\right).
}
\end{equation*}
}
Applying for example {\tt ShortestVectors} of {\tt GAP}
to $L_{C_3\times C_{21}}$,
one can confirm within one minute that $L_{C_3\times C_{21}}$ is extremal, i.e. ${\rm min}(L_{C_3\times C_{21}})=4$.
We denote $L_{C_3\times C_{21}}$ by $O_{40}$.
\begin{rem}\label{r1}
Since a generalized quadratic residue code of length $20$ is invariant under the permutations of the coordinates $p_c:x_{t}\mapsto x_{t+1}$ of order $19$ and $p_s:x_{t}\mapsto x_{4t}$ of order $9$, the automorphism group ${\rm Aut}(O_{40})$ has a subgroup $\varGamma$ of order $342$ which arises from $p_c$, $p_s$ and the change of sign (cf. \cite[Section $3$]{s}).
\end{rem}

\section{Types of $4$-vectors}
Let $S$ be the set of the $4$-vectors of $O_{40}$. By the theory of modular forms (cf. e.g. \cite[Chapter $7$]{se}),
one has $|S|=39600$. It follows immediately from ${\rm min}(O_{40})=4$ that
\begin{equation*}
\bm{u}\,\lt\bm{v}\in\{0,\pm1,\pm2,\pm4\}\ \ {\rm for}\ {\rm all}\ \bm{u},\bm{v}\in S.
\end{equation*}
The type of $\bm{v}\in S$ is defined by
\begin{equation*}
t(\bm{v}):=[\,t_0(\bm{v}),\ t_1(\bm{v}),\ t_2(\bm{v}),\ t_4(\bm{v})\,],\ \ t_n(\bm{v}):=\big{|}\{\bm{u}\in S\ |\ \bm{u}\,\lt\bm{v}=n\}\big{|}.
\end{equation*}
By definition $t_0(\bm{v})+2(t_1(\bm{v})+t_2(\bm{v})+t_4(\bm{v}))=39600$ and $t_4(\bm{v})=1$ for all $\bm{v}\in S$. Using the function {\tt qfminim} of {\tt PARI/GP}, we obtain the list of the elements of $S$.
It turns out by direct calculation that $18$ types appear for the vectors of $S$,
and then we divide $S$ into subsets $S_1,\dots,S_{18}$ which each consist of vectors with the same type, see Table \ref{type18}.
% !TEX root = extremalrank40.tex

%\setcounter{MaxMatrixCols}{40}
\renewcommand{\arraystretch}{0.9}
{\small
%\begin{equation*}
%{\tabcolsep=0.5mm
\begin{table}
%\begin{equation*}
\begin{tabular}
{c|cccc|c}
$S_i$&$t_{0}$&$t_{1}$&$t_{2}$&$t_{4}$&$|S_i|$\\ \hline
$S_1$&24018&7752&38&1&4\\
$S_2$&24234&7608&74&1&304\\
$S_3$&24270&7584&80&1&684\\
$S_4$&24342&7536&92&1&912\\
$S_5$&24378&7512&98&1&2736\\
$S_6$&24414&7488&104&1&5472\\
$S_7$&24450&7464&110&1&9576\\
$S_8$&24486&7440&116&1&4788\\
$S_9$&24522&7416&122&1&4788\\
$S_{10}$&24558&7392&128&1&5016\\
$S_{11}$&24594&7368&134&1&1368\\
$S_{12}$&24630&7344&140&1&2052\\
$S_{13}$&24666&7320&146&1&228\\
$S_{14}$&24738&7272&158&1&684\\
$S_{15}$&24810&7224&170&1&152\\
$S_{16}$&24918&7152&188&1&684\\
$S_{17}$&24990&7104&200&1&76\\
$S_{18}$&25206&6960&236&1&76  
\end{tabular}
\vspace{3mm}
\caption{Division of $S$ by the type.}
\label{type18}
%\end{equation*}
\end{table}
}
%\end{equation*}
%}
Similarly, for $1\le i\le 18$ the $S_i$-type of $\bm{v}\in S_i$ is defined by
\begin{equation*}
t^i(\bm{v}):=[\,t^i_0(\bm{v}),\ t^i_1(\bm{v}),\ t^i_2(\bm{v}),\ t^i_4(\bm{v})\,],\ \ t^i_n(\bm{v}):=\big{|}\{\bm{u}\in S_i\ |\ \bm{u}\,\lt\bm{v}=n\}\big{|}.
\end{equation*}
Actually the vectors of $S_i$ have $m_i$ possible $S_i$-types where $m_i\ge1$, and we divide $S_i$ into subsets $S_{i,1},\dots,S_{i,m_i}$ which each consist of vectors with the same $S_i$-type.

Further, for $1\le j\le m_i$ we compute the $S_{i,j}$-type of $\bm{v}\in S_{i,j}$ defined by
\begin{equation*}
t^{i,j}(\bm{v}):=[\,t^{i,j}_0(\bm{v}),\ t^{i,j}_1(\bm{v}),\ t^{i,j}_2(\bm{v}),\ t^{i,j}_4(\bm{v})\,],\ \ t^{i,j}_n(\bm{v}):=\big{|}\{\bm{u}\in S_{i,j}\ |\ \bm{u}\,\lt\bm{v}=n\}\big{|}.
\end{equation*}
It turns out that the vectors of $S_{i,j}$ have the only $S_{i,j}$-type,
see Table \ref{type64irr}.
% !TEX root = extremalrank40.tex

\setcounter{MaxMatrixCols}{40}
\renewcommand{\arraystretch}{0.5}
{\small
%\begin{equation*}
%{\tabcolsep=0.5mm
\begin{table}
\begin{center}
%\begin{equation*}
\begin{tabular}{cc}
\begin{minipage}{0.5\hsize}
\begin{center}
\begin{tabular}
{c|cccc|c}
$S_{i,j}$&$t^{i,j}_{0}$&$t^{i,j}_{1}$&$t^{i,j}_{2}$&$t^{i,j}_{4}$&$|S_{i,j}|$\\ \hline
$S_{1,1}$&2&0&0&1&4 \\ $S_{2,1}$&38&18&0&1&76 \\ $S_{2,2}$&162&32&0&1&228 \\ $S_{3,1}$&434&120&4&1&684 \\ $S_{4,1}$&114&56&0&1&228 \\ $S_{4,2}$&422&128&2&1&684 \\ $S_{5,1}$&362&154&6&1&684 \\ $S_{5,2}$&402&138&2&1&684 \\ $S_{5,3}$&414&130&4&1&684 \\ $S_{5,4}$&438&120&2&1&684 \\ $S_{6,1}$&330&168&8&1&684 \\ $S_{6,2}$&422&128&2&1&684 \\ $S_{6,3}$&422&128&2&1&684 \\ $S_{6,4}$&422&128&2&1&684 \\ $S_{6,5}$&446&114&4&1&684 \\ $S_{6,6}$&434&120&4&1&684 \\ $S_{6,7}$&434&122&2&1&684 \\ $S_{6,8}$&446&112&6&1&684 \\ $S_{7,1}$&330&168&8&1&684 \\ $S_{7,2}$&302&178&12&1&684 \\ $S_{7,3}$&362&160&0&1&684 \\ $S_{7,4}$&374&152&2&1&684 \\ $S_{7,5}$&374&152&2&1&684 \\ $S_{7,6}$&438&120&2&1&684 \\ $S_{7,7}$&850&252&6&1&1368 \\ $S_{7,8}$&426&128&0&1&684 \\ $S_{7,9}$&840&260&3&1&1368 \\ $S_{7,10}$&434&122&2&1&684 \\ $S_{7,11}$&402&138&2&1&684 \\ $S_{7,12}$&390&146&0&1&684 \\ $S_{8,1}$&362&154&6&1&684 \\ $S_{8,2}$&446&114&4&1&684
\end{tabular}
\end{center}
%\vspace{3mm}
\end{minipage}
\hfill
\begin{minipage}{0.5\hsize}
\begin{center}
\begin{tabular}
{c|cccc|c}
$S_{i,j}$&$t^{i,j}_{0}$&$t^{i,j}_{1}$&$t^{i,j}_{2}$&$t^{i,j}_{4}$&$|S_{i,j}|$\\ \hline
$S_{8,3}$&434&120&4&1&684 \\ $S_{8,4}$&446&114&4&1&684 \\ $S_{8,5}$&434&120&4&1&684 \\ $S_{8,6}$&398&136&6&1&684 \\ $S_{8,7}$&446&112&6&1&684 \\ $S_{9,1}$&302&178&12&1&684 \\ $S_{9,2}$&422&130&0&1&684 \\ $S_{9,3}$&470&98&8&1&684 \\ $S_{9,4}$&434&122&2&1&684 \\ $S_{9,5}$&386&144&4&1&684 \\ $S_{9,6}$&426&128&0&1&684 \\ $S_{9,7}$&450&112&4&1&684 \\ $S_{10,1}$&434&120&4&1&684 \\ $S_{10,2}$&422&128&2&1&684 \\ $S_{10,3}$&422&128&2&1&684 \\ $S_{10,4}$&446&114&4&1&684 \\ $S_{10,5}$&422&128&2&1&684 \\ $S_{10,6}$&398&136&6&1&684 \\ $S_{10,7}$&114&56&0&1&228 \\ $S_{10,8}$&362&154&6&1&684 \\ $S_{11,1}$&422&130&0&1&684 \\ $S_{11,2}$&434&122&2&1&684 \\ $S_{12,1}$&330&168&8&1&684 \\ $S_{12,2}$&446&114&4&1&684 \\ $S_{12,3}$&422&128&2&1&684 \\ $S_{13,1}$&90&62&6&1&228 \\ $S_{14,1}$&450&112&4&1&684 \\ $S_{15,1}$&38&0&18&1&76 \\ $S_{15,2}$&38&18&0&1&76 \\ $S_{16,1}$&614&0&34&1&684 \\ $S_{17,1}$&38&0&18&1&76 \\ $S_{18,1}$&38&0&18&1&76
\end{tabular}
\end{center}
\end{minipage}
%\end{equation*}
\end{tabular}
\vspace{3mm}
\caption{Irreducible subsets and types of $S$.}
\label{type64irr}
\end{center}
\end{table}
%}
%\end{equation*}
}
This implies that $S$ is ultimately divided into $64$ subsets $S_{i,j}$'s by the types. We call $S_{i,j}$ an {\it irreducible subset} and $t^{i,j}$ an {\it irreducible type} from the fact. If $S$ has an irreducible subset with an irreducible type $[\,78,0,0,1\,]$ then $O_{40}$ contains a $4$-frame. But we do not know whether the converse holds true. Therefore we count directly the numbers of orthogonal $4$-vectors contained in $O_{40}$ in the next section.

Here $40$ vectors chosen appropriately from $S_{1,1}\cup S_{18,1}$ form a basis of the lattice $M$ described in Lemma \ref{p1}, and thus $M\subset O_{40}$.
\begin{proof}[Proof of Theorem $\ref{t3}$]
Let $\{\bm{m}_{1},\dots,\bm{m}_{40}\}$ be a basis of $M$ and let $B_M$ be the matrix $B_M$ such that
\begin{equation*}
\begin{pmatrix}
\bm{m}_{1}\\
\vdots\\
\bm{m}_{40}
\end{pmatrix}
=B_MB
\begin{pmatrix}
\bm{e}_1\\
\vdots\\
\bm{e}_{40}
\end{pmatrix},
\end{equation*}
where $B$ is the matrix given by \eqref{basis}. Further we take a basis $\bm{l}_1,\dots,\bm{l}_{40}$ of $L$ such that
\begin{equation*}
\begin{pmatrix}
\bm{l}_{1}\\
\vdots\\
\bm{l}_{40}
\end{pmatrix}
=
\begin{pmatrix}
3I_{20}&O\vspace{2mm}\\
O&21I_{20}
\end{pmatrix}
\begin{pmatrix}
\bm{e}_1\\
\vdots\\
\bm{e}_{40}
\end{pmatrix}.
\end{equation*}
Then by computing one can easily verify that there is a unimodular matrix $U$ of rank $80$ such that
\begin{equation*}
U
\left(
\begin{array}{cc}
B_M\hspace{-7mm}&\hspace{-7mm}B\vspace{2mm}\\\hdashline\\
3I_{20}&O\vspace{2mm}\\
O&21I_{20}
\end{array}
\right)
=
\begin{pmatrix}
\hspace{3.5mm}B&\vspace{1.5mm}\\\hdashline\\\\
\hspace{3.5mm}O&\vspace{1mm}\\
&
\end{pmatrix}.
\end{equation*}
This means that $U\,\lt(\lt\bm{m}_1,\dots,\lt\bm{m}_{40},\lt\bm{l}_1,\dots,\lt\bm{l}_{40})=\lt(\lt\bm{v}_1,\dots,\lt\bm{v}_{40},\lt\bm{0},\dots,\lt\bm{0})$, where $\{\bm{v}_1,\dots,\bm{v}_{40}\}$ is a basis of $O_{40}$. This completes the proof.
\end{proof}
\begin{rem}
From the case where $R$ is of the lattices with Gram matrices
\begin{equation*}
\begin{pmatrix}
4&0\\
0&4
\end{pmatrix}
,
\begin{pmatrix}
4&1\\
1&6
\end{pmatrix}
,
\begin{pmatrix}
4&2\\
2&6
\end{pmatrix}
,
\begin{pmatrix}
6&1\\
1&6
\end{pmatrix}
,
\end{equation*}
we can actually obtain at least two extremal Type II lattices of rank $40$. All those lattices, however, contain a $4$-frame because the lattices have an irreducible subset with an irreducible type $[\,78,0,0,1\,]$, and thus belong to the $16470$ examples. One of those lattices has the $4$-vectors of three types and the automorphism group of order at least $27360$.
\end{rem}

\section{Orthogonal $4$-vectors}
Let $n(O_{40})$ be the maximum of the numbers of orthogonal $4$-vectors contained in $O_{40}$.
Then we prove the following lemma:
\begin{lem}\label{l1}
$n(O_{40})=32$.
\end{lem}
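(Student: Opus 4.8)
The plan is to establish the two inequalities $n(O_{40})\ge 32$ and $n(O_{40})\le 32$ separately. Both are questions about cliques in the \emph{orthogonality graph} $G$ on the set $S$ of $4$-vectors, whose edges join pairs $\bm u,\bm v\in S$ with $\bm u\,\lt\bm v=0$: a set of mutually orthogonal $4$-vectors is exactly a clique of $G$, and the vectors in a clique are linearly independent, so every clique has size at most $40$. For the lower bound I would exhibit $32$ explicit pairwise orthogonal $4$-vectors. Since $M=A_1(2)\oplus A_1(2)\oplus A_{19}(2)\oplus A_{19}(2)\subset O_{40}$ by Lemma \ref{p1}, and a maximal set of pairwise orthogonal roots of $A_{19}$ has size $\lfloor 20/2\rfloor=10$, the minimal vectors of $M$ already furnish $1+1+10+10=22$ mutually orthogonal $4$-vectors. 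To produce the remaining ten I would pass to the rank-$18$ section $O_{40}\cap W^{\perp}$, where $W$ is the span of these $22$ vectors, enumerate its $4$-vectors with {\tt qfminim}, and select a maximal orthogonal subset; this yields ten further $4$-vectors, and pairwise orthogonality of the resulting $32$ vectors is then confirmed directly against the Gram matrix presented in the appendix.

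For the upper bound the method is a depth-first search for cliques organised by orthogonal complements. Given orthogonal $4$-vectors $\bm v_1,\dots,\bm v_r$, any $4$-vector that can extend them lies in $O^{(r)}:=O_{40}\cap\{\bm v_1,\dots,\bm v_r\}^{\perp}$, an even lattice of rank $40-r$ whose $4$-vectors are precisely the common neighbours in $G$; one then branches on the choice of $\bm v_{r+1}$ among these. The search is made finite and tractable by three devices: (a) the group $\varGamma$ of order $342$ from Remark \ref{r1} acts on $G$, so the root vertex $\bm v_1$ may be taken from a set of $\varGamma$-orbit representatives of $S$; (b) the type stratification of Tables \ref{type18} and \ref{type64irr} restricts which irreducible subsets $S_{i,j}$ can meet a common clique and bounds the neighbourhood sizes $t_0(\bm v)$; and (c) once $40-r$ is small the value $\mathrm{min}(O^{(r)})$ and the full list of $4$-vectors of $O^{(r)}$ are computed outright, and a branch is pruned as soon as $\mathrm{min}(O^{(r)})>4$ (no extension exists) or as soon as $r+(40-r)$ cannot beat the lower bound $32$ already in hand. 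Carrying the search to completion shows that no clique of size $33$ occurs, whence $n(O_{40})\le 32$. Note that the classification of $4$-frame lattices via doubly even codes only forbids a full $40$-frame, so the sharper bound $32$ genuinely requires this direct search rather than an appeal to Corollary \ref{c1}.

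The main obstacle is controlling the size of the search tree. The top of the tree is enormous, since a typical $4$-vector is orthogonal to more than $24000$ others (column $t_0$ of Table \ref{type18}), so naive enumeration is hopeless. The crux is therefore to combine the symmetry reduction by $\varGamma$ with aggressive branch-and-bound against the explicit lower bound, and to descend quickly to orthogonal complements of small rank where shortest-vector enumeration is immediate; I expect $\mathrm{min}(O^{(r)})$ to rise above $4$ well before $r$ reaches $32$ along most branches, which is what keeps the computation finite. A secondary point demanding care is completeness: because only the root vertex is reduced modulo $\varGamma$, one must verify that fixing a single orbit representative together with the pruning still visits every clique up to the claimed maximum, so that $n(O_{40})\le 32$ is actually \emph{proved} and not merely left uncontradicted.
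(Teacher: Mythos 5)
Your plan is correct in outline and lands in the same place, but it is organised differently from the paper's computation, and the comparison is instructive. Both arguments are finite searches in the orthogonality graph on $S$, and both use the order-$342$ group $\varGamma$ of Remark \ref{r1} to reduce the choice of the first vector to the $132$ representatives of $S/\varGamma$. From there the paper does not separate the two inequalities and never descends to orthogonal-complement lattices or to the sublattice $M$: for each representative $\bm v$ it forms $S_1(\bm v)$ ($\bm v$ together with all $4$-vectors perpendicular to it) and then repeatedly adjoins the single vector $\bm u$ maximizing $|S_n(\bm v)\cap S_1(\bm u)|$, setting $S_{n+1}(\bm v):=S_n(\bm v)\cap S_1(\bm u)$ until $|S_m(\bm v)|=m$; the resulting table of terminal sizes $m_{\bm v}$ has maximum $32$, which is read off as both the witness and the bound. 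That procedure is a greedy descent, not an exhaustive one: a maximum clique through $\bm v$ need not contain the $\bm u$ with the largest common neighbourhood, so the completeness worry you raise at the end of your proposal applies with at least as much force to the paper's own method as to yours. Your depth-first branch-and-bound, pruned by ${\rm min}(O^{(r)})>4$ and by the explicit lower bound, is costlier at the top of the tree (as you note, $t_0(\bm v)>24000$) but, if carried to completion, certifies the upper bound $n(O_{40})\le 32$ outright, which is a genuine gain in rigour over the greedy scheme. One small caution on your lower bound: the $1+1+10+10=22$ pairwise orthogonal minimal vectors of $M$ are a natural seed, but nothing guarantees that this particular $22$-set extends to a $32$-clique, so the witness for $n(O_{40})\ge 32$ is better taken directly from whatever maximal clique the search itself produces rather than built on top of $M$.
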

\begin{proof}
For each $\bm{v}\in S$ let $S(\bm{v})$ be a set of orthogonal $4$-vectors containing $\bm{v}$ such that $|S(\bm{v})|$ is largest. By definition it is obvious that
\begin{equation*}
n(O_{40})={\rm max}_{\bm{v}\in S}(|S(\bm{v})|).
\end{equation*}
Since every automorphism $g$ of $O_{40}$ acts on $S$ and $|S(\bm{v}g)|=|S(\bm{v})|$, one has 
\begin{equation*}
{\rm max}_{\bm{v}\in S}(|S(\bm{v})|)={\rm max}_{\bm{v}\in S/\varGamma}(|S(\bm{v})|),
\end{equation*}
where $\varGamma$ is the subgroup of ${\rm Aut}(O_{40})$ described in Remark \ref{r1}.
By computing we have $|S/\varGamma|=132$. Then $S(\bm{v})$ is calculated as follows. For every $\bm{v}\in S$ we compute the set $S_1(\bm{v})$ consisting of $\bm{v}$ and all $4$-vectors perpendicular to $\bm{v}$. We put $S(\bm{v}):=S_1(\bm{v})$ if $|S_1(\bm{v})|=1$, otherwise
put
\begin{equation*}
S_2(\bm{v}):=S_1(\bm{v})\cap S_1(\bm{u}),\ {\rm for}\ \bm{u}\in S_1(\bm{v})
\end{equation*}
such that $|S_1(\bm{v})\cap S_1(\bm{u})|$ is largest. Then $S_2(\bm{v})$ is the set consisting of $2$ orthogonal vectors $\bm{v}$,$\bm{u}$ and all $4$-vectors perpendicular to these $2$ vectors, and by definition
\begin{equation*}
|S_1(\bm{v})|\ge|S_2(\bm{v})|\ge|S_1(\bm{v})\cap S_1(\bm{u'})|\ge2\ {\rm for\ all}\ \bm{u'}\in S_1(\bm{v}).
\end{equation*}
If $|S_2(\bm{v})|=2$ then we put $S(\bm{v}):=S_2(\bm{v})$. If not, repeating this procedure while $|S_n(\bm{v})|>n$ one has $|S_{m_{\bm{v}}}(\bm{v})|={m_{\bm{v}}}$ for some ${m_{\bm{v}}}\le40$, and then put $S(\bm{v}):=S_{m_{\bm{v}}}(\bm{v})$. In Table \ref{distS} we give the number $m_{\bm{v}}$ for $\bm{v}\in S/\varGamma$ and the number $n_{\bm{v}}$ of $\bm{v}$ such that $|S(\bm{v})|=m_{\bm{v}}$.
% !TEX root = extremalrank40.tex

\setcounter{MaxMatrixCols}{40}
\renewcommand{\arraystretch}{1.5}
%{\small
%{\tabcolsep=0.5mm
\begin{table}[H]
\begin{center}
\begin{tabular}
{c|cccccccccc|c}
$m_{\bm{v}}$&19&20&21&22&23&24&25&26&28&32&total\\ \hline
$n_{\bm{v}}$&6&15&30&12&12&12&15&8&21&1&132
\end{tabular}
\end{center}
\vspace{-3mm}
\caption{Orthogonal $4$-vectors of $O_{40}$.}
\label{distS}
\end{table}
%}
%\end{equation*}
%}
One has ${\rm max}_{\bm{v}\in S/\varGamma}(|S(\bm{v})|)=32$ by Table \ref{distS}, and thus $n(O_{40})=32$.
\end{proof}
By Lemma \ref{l1}, $O_{40}$ does not contain any $4$-frames, and thus a proof of Theorem \ref{t1} has been completed.

\section{The automorphism group}
A $40$-dimensional extremal Type II lattice $L_{40}$ has a basis consisting of $4$-vectors and $6$-vectors, while there is the case where $L_{40}$ cannot have a basis consisting of only $4$-vectors, see Theorem 1 and Remark 1 in \cite{o2}.
Fortunately $O_{40}$ is not of the case: 

\begin{lem}\label{l2}
The lattice $O_{40}$ has a basis consisting of $4$-vectors.
\end{lem}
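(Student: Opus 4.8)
The plan is to reduce the statement to a finite, verifiable search among the explicitly known $4$-vectors. Exhibiting a basis of $4$-vectors means producing $\bm{s}_1,\dots,\bm{s}_{40}\in S$ that form a $\mathbb{Z}$-basis of $O_{40}$. Writing each such vector in the construction basis $\{\bm{v}_1,\dots,\bm{v}_{40}\}$ as an integral coordinate row, this is equivalent to requiring that the $40\times40$ integral transition matrix $T$ defined by $\bm{s}_i=\sum_j T_{ij}\bm{v}_j$ be unimodular. Because $O_{40}$ is unimodular, $\det T=\pm1$ is in turn equivalent to the Gram matrix $G:=(\bm{s}_i\,\lt\bm{s}_j)$ having $\det G=1$; note that such a $G$ automatically has diagonal $(4,\dots,4)$ since each $\bm{s}_i\in S$ has norm $4$. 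I stress at the outset that this is not a consequence of the $4$-vectors merely generating $O_{40}$: even over $\mathbb{Z}$ a generating set need not contain a basis, so the claim genuinely asks for a unimodular selection.

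First I would take the list $S$ produced in Section $3$ (with $|S|=39600$) and record each $4$-vector as an integral coordinate row relative to $\{\bm{v}_1,\dots,\bm{v}_{40}\}$. I would then build a basis incrementally: starting from the $40$ vectors of $S_{1,1}\cup S_{18,1}$ that span $M$ (as noted before the proof of Theorem \ref{t3}), I maintain a partial selection whose coordinate matrix is primitive, i.e.\ whose Smith normal form is $[\,I_k\ \vert\ O\,]$, so that it extends to a $\mathbb{Z}$-basis of $O_{40}$; at each step I append a $4$-vector preserving primitivity. To cut down the candidates I would use the action of the subgroup $\varGamma$ of order $342$ from Remark \ref{r1} on $S$, testing only orbit representatives at each stage. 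Continuing until $k=40$ yields a matrix $T$ whose unimodularity is then checked directly, and the resulting Gram matrix $G$ (diagonal $(4,\dots,4)$, determinant $1$) may be recorded explicitly, cf.\ the appendix in Section $6$.

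The main obstacle is that this existence is genuinely computational rather than formal: by \cite{o2} there exist $40$-dimensional extremal Type II lattices whose $4$-vectors contain no basis, so no abstract argument can force the greedy chain above to reach length $40$. In particular the incremental construction may stall at some $k<40$ when no remaining $4$-vector preserves primitivity of the current selection; to guarantee success I would allow backtracking over the finitely many orbit representatives at each level, the abundance of $4$-vectors making a completion plausible and, once found, rigorously verifiable. The closing verification --- computing $\det T$, or equivalently the Smith normal form of the selected coordinate matrix --- is immediate in {\tt PARI/GP} or {\tt GAP}, and a single successful selection completes the proof of Lemma \ref{l2}.
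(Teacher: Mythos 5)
Your proposal is correct in substance and reaches the same kind of certificate as the paper: in both cases the lemma is established by exhibiting $40$ vectors of $S$ whose integral transition matrix from the construction basis is unimodular, equivalently whose Gram matrix has determinant $1$ (your reduction $\det T=\pm1\Leftrightarrow\det G=1$ is valid precisely because $O_{40}$ is unimodular, via $\det G=(\det T)^2$), and the witness Gram matrix is what the paper records in the appendix. The search strategies differ, though. The paper applies {\tt LLLReducedGramMat} to the inverse Gram matrix of the construction basis; since $O_{40}$ is self-dual this is again a Gram matrix of $O_{40}$, and the LLL-reduced basis it returns already contains $34$ vectors of norm $4$ and only $6$ of norm $6$, so the remaining work is a small local search replacing those $6$ vectors. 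Your greedy primitive-extension over all $39600$ elements of $S$, with backtracking and $\varGamma$-orbit reduction, would also terminate successfully and be just as rigorously checkable, but it is a far larger search and offers no head start. One internal inconsistency to fix: you propose to seed the search with the $40$ vectors of $S_{1,1}\cup S_{18,1}$ spanning $M$ while ``maintaining primitivity,'' but $M$ has index $2^{23}\cdot5$ in $O_{40}$ (indeed $\det M=2^{46}\cdot5^2$), so that set is already a full-rank, highly non-primitive selection; you must instead start from a proper primitive subsystem or from the empty selection. Your remark that no abstract argument can substitute for the computation, because of the rank-$40$ extremal lattices in \cite{o2} whose $4$-vectors contain no basis, correctly identifies why both proofs are necessarily computational.
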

\begin{proof}
Let $\{\bm{v}_1,\dots,\bm{v}_{40}\}$ be the basis of $O_{40}$ given by the matrix $B$ of \eqref{basis} and let $G(\bm{v}_1,\dots,\bm{v}_{40})$ be the Gram matrix with respect to $\{\bm{v}_1,\dots,\bm{v}_{40}\}$. Applying the function {\tt LLLReducedGramMat} of {\tt GAP} to $G(\bm{v}_1,\dots,\bm{v}_{40})^{-1}$, one has a basis consisting of $34$ vectors of norm $4$ and $6$ vectors of norm $6$. Then one can easily find by computing a basis consisting of $4$-vectors containing those $34$ vectors.
\end{proof}
Actually we can take a basis $\{\bm{v}'_{1},\dots,\bm{v}'_{40}\}$ of $O_{40}$ such that
\begin{eqnarray*}
&\bm{v}'_{1}\in S_{1,1},\  \bm{v}'_{2}\in S_{7,7},\ \bm{v}'_{3},\bm{v}'_{4}\in S_{15,1},\ \bm{v}'_{5}\in S_{2,2},\ \bm{v}'_{18}\in S_{10,3},\ \bm{v}'_{28}\in S_{6,4},&\\ &\bm{v}'_{31}\in S_{10,8},\ \bm{v}'_{34}\in S_{8,4},\ \bm{v}'_{35}\in S_{10,6},\ \bm{v}'_{39}\in S_{9,5},\ \bm{v}'_{40}\in S_{7,4},&\\
&\bm{v}'_{6},\bm{v}'_{8},\bm{v}'_{13},\bm{v}'_{14},\bm{v}'_{17},\bm{v}'_{23},\bm{v}'_{25},\bm{v}'_{26},\bm{v}'_{27},\bm{v}'_{29},\bm{v}'_{30},\bm{v}'_{33},\bm{v}'_{37},\bm{v}'_{38}\in S_{16,1},&\\
&{\rm others}\in S_{9,3}.&
\end{eqnarray*}
The Gram matrix with respect to $\{\bm{v}'_{1},\dots,\bm{v}'_{40}\}$ is given in Section $6$.

Let $B'$ be the matrix such that
\begin{equation*}
B'\,\lt(\lt\bm{v}_{1},\dots,\lt\bm{v}_{40})=\lt(\lt\bm{v}'_{1},\dots,\lt\bm{v}'_{40}).
\end{equation*}
Then one has
\begin{equation*}
B'\,G(\bm{v}_1,\dots,\bm{v}_{40})\,\lt B'=G(\bm{v}'_1,\dots,\bm{v}'_{40}).
\end{equation*}
Since each automorphism of $O_{40}$ is regarded as an element $g$ of ${\rm GL}_{40}(\mathbb{Z})$ fixing $G(\bm{v}_1,\dots,\bm{v}_{40})$, one has
\begin{equation*}
B'g\,G(\bm{v}_1,\dots,\bm{v}_{40})\,\lt(B'g)=B'G(\bm{v}_1,\dots,\bm{v}_{40})\lt B'.
\end{equation*}
Note that $g$ preserves the type of $S$.
Hence putting $B':=\lt(\lt\bm{b}'_{1},\dots,\lt\bm{b}'_{40})$ and $V:=\lt(\lt\bm{v}_{1},\dots,\lt\bm{v}_{40})$, we see that $\bm{b}'_1gV,\dots,\bm{b}'_{40}gV$ each belong to the same irreducible subset with $\bm{v}'_1,\dots,\bm{v}'_{40}$. Then we enumerate all the matrices $X:=\lt(\lt\bm{x}_1,\dots,\lt\bm{x}_{40})$ satisfying the following conditions:
\begin{itemize}
\item Each row vector $\bm{x}_i$ belongs to the same irreducible subset with $\bm{v}'_i$ and particularly $\bm{x}_2$ belongs to a fixed complete set of representatives for $S_{7,7}/\varGamma$, where $|S_{7,7}/\varGamma|=4$ by computing. 
\item $G(\bm{x}_1,\dots,\bm{x}_{40})=G(\bm{v}'_1,\dots,\bm{v}'_{40})$.
\end{itemize}
Exploring on a computer, it is shown that there are exactly $2$ matrices $X_1$,$X_2$ with these properties. Since $\varGamma$ acts transitively on each $\varGamma$-orbit of $S_{7,7}$, for every $g\in{\rm Aut}(O_{40})$ there is an element $\gamma$ of $\varGamma$ such that $B'g\gamma V=X_1$ or $B'g\gamma V=X_2$. Hence ${\rm Aut}(O_{40})$ is obtained by adding $B'^{-1}X_1V^{-1}$ and $B'^{-1}X_2V^{-1}$ to $\varGamma$. As a result, one has
\begin{equation*}
|{\rm Aut}(O_{40})|=684=2|\varGamma|.
\end{equation*}
By direct calculation, one can verify that ${\rm Aut}(O_{40})$ has the generators $g_1$,$g_2$ of orders $36$,$19$ respectively such that $g_1g_2g_1^{-1}=g_2^3$ (cf. Section $6$). Hence ${\rm Aut}(O_{40})$ contains the cyclic group $\langle g_2\rangle$ of order $19$ as a normal subgroup. Further one has
\begin{equation*}
\langle g_2\rangle\langle g_1\rangle={\rm Aut}(O_{40}),\ \ \langle g_2\rangle\cap\langle g_1\rangle=\{I_{40}\}.
\end{equation*}
Therefore ${\rm Aut}(O_{40})$ is isomorphic to the semidirect product of $\langle g_2\rangle$ and $\langle g_1\rangle$.
Regarding $\mathbb{Z}/19\mathbb{Z}$ and $\mathbb{Z}/36\mathbb{Z}$ as additive groups, this implies that there is the following isomorphism:
\begin{equation*}
{\rm Aut}(O_{40})\ni g_2^ag_1^b\mapsto(a,b)\in\mathbb{Z}/19\mathbb{Z}\rtimes_{\psi}\mathbb{Z}/36\mathbb{Z},
\end{equation*}
where $\psi$ is the homomorphism given in Theorem \ref{t2}, and thus the proof has been completed.

\section{Appendix}
In Tables $6.1$-$6.3$ we give examples of a Gram matrix $G(\bm{v}'_1,\dots,\bm{v}'_{40})$ of $O_{40}$ and the generators $g_1$, $g_2$ of ${\rm Aut}(O_{40})$ with respect to the basis $\{\bm{v}'_1,\dots,\bm{v}'_{40}\}$, which satisfy the conditions described in the previous section:
% !TEX root = extremalrank40.tex
%\samepage

\setcounter{MaxMatrixCols}{40}
\renewcommand{\arraystretch}{0.7}
{\scriptsize
\begin{table}[H]
%\vspace{-3mm}
\begin{equation*}
{\tabcolsep=0.3mm
\left(
\begin{tabular}
%{50mm}
{cccccccccccccccccccccccccccccccccccccccc}
4&-1&-2&2&0&0&1&0&1&1&1&1&0&0&1&1&0&0&1&-1&1&1&0&-1&0&0&0&0&0&0&1&-1&0&
      0&-1&1&0&0&-1&-1\\-1&4&0&0&0&-1&0&-1&0&-1&-1&-1&0&1&0&0&0&1&1&0&-1&0&-1&0&0&1&0&0&
      0&-1&-1&1&0&0&0&-1&-1&0&0&-1\\-2&0&4&-2&0&2&-1&0&0&0&-1&-1&0&0&0&-1&0&0&-1&0&0&
      -1&0&0&0&0&0&1&0&0&1&1&0&0&2&0&0&0&2&2\\2&0&-2&4&0&0&0&0&1&0&0&1&0&0&1&1&0&0&0&0&0&0&0&-1&0&0&0&0&0&
      0&-1&0&0&0&-1&1&0&0&-1&-1\\0&0&0&0&4&1&0&1&0&1&0&-1&0&1&0&-1&-1&0&-1&1&-1&-1&1&0&
      1&0&0&-1&0&1&0&0&-1&0&0&-1&-1&1&0&0\\0&-1&2&0&1&4&-2&2&0&0&-1&-1&0&0&0&-1&0&0&-2&1&0&-2&2&-1&0&-2&2&0&0&2&
      1&1&0&0&1&-1&0&0&1&1\\1&0&-1&0&0&-2&4&-1&1&1&1&1&-1&-1&1&1&0&1&1&-1&1&1&-1&0&0&1&
      -2&1&-1&-2&0&-1&1&0&0&1&0&0&0&0\\0&-1&0&0&1&2&-1&4&0&0&-1&0&0&0&-1&0&0&0&-2&
      1&-1&-2&2&-1&0&-2&2&-1&0&2&-1&0&0&0&0&-2&0&0&0&0\\1&0&0&1&0&0&1&0&4&0&1&1&1&-1&1&1&2&1&0&-1&1&1&0&-1&-2&-1&-1&0&-2&0&0&-1&
      2&-1&0&1&1&-1&0&0\\1&-1&0&0&1&0&1&0&0&4&0&1&1&1&1&0&-1&0&0&0&0&1&-1&-1&1&1&-1&0&
      0&-1&1&1&-1&1&0&1&-1&2&0&0\\1&-1&-1&0&0&-1&1&-1&1&0&4&1&-1&-2&1&1&2&0&0&-1&1&
      1&0&0&-1&-1&-1&0&-2&0&1&-1&1&-1&0&1&2&-1&0&0\\1&-1&-1&1&-1&-1&1&0&1&1&1&4&0&-1&1&1&1&0&0&0&1&1&0&-1&0&0&0&0&0&0&0&0&1&
      0&0&1&0&1&0&0\\0&0&0&0&0&0&-1&0&1&1&-1&0&4&2&-1&0&0&-1&1&0&0&1&0&0&0&0&0&0&0&0&
      0&0&0&0&-1&1&0&0&-1&-1\\0&1&0&0&1&0&-1&0&-1&1&-2&-1&2&4&-1&-1&-2&-1&1&0&-2&0&0&
      1&2&2&0&0&2&0&0&1&-2&1&-1&0&-2&2&-1&-1\\1&0&0&1&0&0&1&-1&1&1&1&1&-1&-1&4&1&0&0&0&0&1&1&0&-1&-1&0&0&0&-1&-1&1&
      0&0&-1&0&1&0&1&0&0\\1&0&-1&1&-1&-1&1&0&1&0&1&1&0&-1&1&4&1&-1&0&-1&1&1&-1&0&-1&-1&
      0&0&-1&-1&0&0&2&0&-1&1&2&-1&-1&-1\\0&0&0&0&-1&0&0&0&2&-1&2&1&0&-2&0&1&4&0&0&-1&1&0&0&-1&-2&-2&0&0&-2&0&
      0&0&2&-1&0&0&2&-2&0&0\\0&1&0&0&0&0&1&0&1&0&0&0&-1&-1&0&-1&0&4&0&1&0&0&0&-2&0&0&
      0&0&-1&0&0&0&0&-1&1&-1&-1&0&1&0\\1&1&-1&0&-1&-2&1&-2&0&0&0&0&1&1&0&0&0&0&4&-1&0&1&-1&0&1&2&-1&0&1&-2&
      0&-1&-1&0&-1&1&-1&0&-1&-1\\-1&0&0&0&1&1&-1&1&-1&0&-1&0&0&0&0&-1&-1&1&-1&4&-1&-1&1&
      -1&1&-1&2&-1&0&2&-1&1&-1&0&0&-1&-1&1&0&0\\1&-1&0&0&-1&0&1&-1&1&0&1&1&0&-2&1&1&1&0&0&-1&4&1&0&0&-2&-1&0&1&-1&-1&1&
      -1&2&0&0&1&2&-1&0&0\\1&0&-1&0&-1&-2&1&-2&1&1&1&1&1&0&1&1&0&0&1&-1&1&4&-2&0&-1&1&
      -1&0&-1&-1&1&0&1&0&-1&1&0&0&-1&-1\\0&-1&0&0&1&2&-1&2&0&-1&0&0&0&0&0&-1&0&0&-1&
      1&0&-2&4&0&0&-2&2&-1&0&2&0&-1&0&-1&0&-1&0&0&0&0\\-1&0&0&-1&0&-1&0&-1&-1&-1&0&-1&0&1&-1&0&-1&-2&0&-1&0&0&0&4&0&1&-1&1&1&0&0&-1&0&
      1&0&1&1&0&0&0\\0&0&0&0&1&0&0&0&-2&1&-1&0&0&2&-1&-1&-2&0&1&1&-2&-1&0&0&4&2&0&0&2&
      0&0&1&-2&1&0&0&-2&2&0&0\\0&1&0&0&0&-2&1&-2&-1&1&-1&0&0&2&0&-1&-2&0&2&-1&-1&1&
      -2&1&2&4&-2&1&2&-2&0&0&-2&1&0&1&-2&2&0&0\\0&0&0&0&0&2&-2&2&-1&-1&-1&0&0&0&0&0&0&0&-1&2&0&-1&2&-1&0&-2&4&-1&0&2&0&1&
      0&0&0&-2&0&0&-1&-1\\0&0&1&0&-1&0&1&-1&0&0&0&0&0&0&0&0&0&0&0&-1&1&0&-1&1&0&1&-1&4&
      0&-1&1&0&0&0&1&2&1&0&1&0\\0&0&0&0&0&0&-1&0&-2&0&-2&0&0&2&-1&-1&-2&-1&1&0&-1&
      -1&0&1&2&2&0&0&4&0&0&0&-2&1&0&0&-2&2&0&0\\0&-1&0&0&1&2&-2&2&0&-1&0&0&0&0&-1&-1&0&0&-2&2&-1&-1&2&0&0&-2&2&-1&0&4&0&0&
      0&0&0&-1&0&0&0&0\\1&-1&1&-1&0&1&0&-1&0&1&1&0&0&0&1&0&0&0&0&-1&1&1&0&0&0&0&0&1&0&
      0&4&0&0&-1&1&1&0&0&1&0\\-1&1&1&0&0&1&-1&0&-1&1&-1&0&0&1&0&0&0&0&-1&1&-1&0&
      -1&-1&1&0&1&0&0&0&0&4&0&1&0&-1&-1&1&0&0\\0&0&0&0&-1&0&1&0&2&-1&1&1&0&-2&0&2&2&0&-1&-1&2&1&0&0&-2&-2&0&0&-2&0&0&0&
      4&0&0&0&2&-2&0&0\\0&0&0&0&0&0&0&0&-1&1&-1&0&0&1&-1&0&-1&-1&0&0&0&0&-1&1&1&1&0&0&
      1&0&-1&1&0&4&-1&0&0&1&-1&0\\-1&0&2&-1&0&1&0&0&0&0&0&0&-1&-1&0&-1&0&1&-1&0&0&
      -1&0&0&0&0&0&1&0&0&1&0&0&-1&4&0&0&0&2&1\\1&-1&0&1&-1&-1&1&-2&1&1&1&1&1&0&1&1&0&-1&1&-1&1&1&-1&1&0&1&-2&2&0&-1&1&
      -1&0&0&0&4&1&0&0&0\\0&-1&0&0&-1&0&0&0&1&-1&2&0&0&-2&0&2&2&-1&-1&-1&2&0&0&1&-2&-2&
      0&1&-2&0&0&-1&2&0&0&1&4&-2&0&0\\0&0&0&0&1&0&0&0&-1&2&-1&1&0&2&1&-1&-2&0&0&1&-1&0&0&0&2&2&0&0&2&0&
      0&1&-2&1&0&0&-2&4&0&0\\-1&0&2&-1&0&1&0&0&0&0&0&0&-1&-1&0&-1&0&1&-1&0&0&-1&0&0&0&
      0&-1&1&0&0&1&0&0&-1&2&0&0&0&4&2\\-1&-1&2&-1&0&1&0&0&0&0&0&0&-1&-1&0&-1&0&0&-1&0&0&-1&0&0&0&0&-1&0&0&0&
      0&0&0&0&1&0&0&0&2&4
\end{tabular}
\right).
}
\end{equation*}
\vspace{-3mm}
\caption{A Gram matrix of $O_{40}$.}
\vspace{-3mm}
\end{table}
%\end{center}
%\end{equation*}
}

%\newpage
% !TEX root = extremalrank40.tex

\setcounter{MaxMatrixCols}{40}
\renewcommand{\arraystretch}{0.7}
{\scriptsize
\begin{table}
\begin{equation*}
{\tabcolsep=0.25mm
\left(
\begin{tabular}
%{50mm}
{cccccccccccccccccccccccccccccccccccccccc}
1&3&4&1&-1&2&1&-3&5&4&0&3&-5&-5&-9&-4&-7&-5&2&1&-4&-2&-1&0&-3&-1&3&-1&-1&-3&2&0&1&
      -5&-4&0&4&4&-2&-2\\-5&0&-5&-3&-4&4&1&-8&9&8&4&-2&-5&5&-2&3&-1&-5&-5&0&3&-1&0&-3&3&4&
      7&2&3&2&-5&-4&-2&-2&-1&-3&-3&-6&3&-1\\-2&-1&-2&-1&0&1&0&-1&1&-1&1&0&0&2&1&1&0&0&-1&
      -1&1&0&-1&-1&1&-1&1&1&0&0&-1&-1&-1&1&0&0&-2&-1&1&0\\0&1&1&0&-2&2&1&-3&4&4&1&0&-3&-1&-4&-1&-2&-3&-1&1&-1&0&0&0&0&1&3&0&1&-1&-1&
      -1&0&-3&-2&-1&1&0&0&-1\\4&-1&2&2&4&-3&-1&10&-12&-10&-4&1&7&0&8&0&6&8&4&0&1&3&0&4&0&
      -3&-10&-2&-2&1&3&4&0&4&4&2&1&2&-2&2\\3&-2&2&1&3&-3&-2&6&-8&-7&-2&1&5&0&6&0&3&6&3&-1&0&1&-1&3&0&-4&-6&0&-2&0&1&
      2&1&3&2&1&-1&2&-1&1\\-2&2&0&0&-2&2&2&-5&6&6&1&0&-4&-2&-6&-1&-4&-5&-1&0&-1&-2&0&-2&
      -1&3&5&0&1&1&0&-1&0&-4&-3&-1&2&0&0&-1\\2&-2&2&2&2&-4&-2&5&-6&-4&-2&0&3&0&4&0&2&4&4&0&0&0&0&2&-1&-2&-5&0&-2&1&2&
      2&2&2&2&0&0&2&-1&1\\2&-1&2&1&2&-2&0&3&-4&-4&-2&0&3&-2&2&0&2&3&1&-1&0&1&0&2&0&-2&
      -3&-1&-1&0&1&2&-1&1&1&1&0&2&-1&0\\2&1&2&1&1&0&1&4&-3&-4&-1&2&2&-2&0&-2&-1&1&2&
      1&-1&0&0&1&-2&0&-4&-2&-1&-1&3&3&0&0&0&1&3&2&-2&0\\6&1&6&4&5&-6&-1&11&-11&-10&-5&1&6&-6&3&-3&3&6&5&0&-2&3&1&4&-2&-5&-9&-3&-3&-2&6&5&1&
      3&2&4&3&7&-3&1\\-1&2&1&0&-2&3&2&-5&7&5&1&1&-5&-4&-9&-2&-6&-6&0&1&-3&-2&1&-2&-3&3&5&
      -1&0&-2&1&0&0&-4&-4&0&3&2&0&-2\\2&-2&-1&-1&2&-1&-1&5&-7&-7&-2&0&5&2&7&2&5&5&0&0&
      2&2&1&1&1&0&-7&-1&-1&1&0&2&-1&4&4&1&-1&-1&-1&1\\-1&0&-3&-2&-1&3&0&0&0&-1&0&1&0&3&2&2&1&0&-1&1&2&1&1&-1&1&2&-3&0&0&1&-1&0&
      -2&1&2&-1&0&-3&0&0\\-2&3&1&0&-3&3&2&-6&10&7&2&1&-6&-4&-9&-3&-7&-7&-1&0&-2&-2&-1&-1&
      0&0&7&0&1&-2&0&-1&-1&-5&-5&-1&2&1&0&-2\\-10&5&-4&-3&-11&10&4&-23&30&25&9&-2&-19&-2&-23&-1&-16&-22&-6&2&-2&-7&0&-9&-1&8&22&4&4&-1&
      -5&-8&-1&-12&-10&-6&0&-3&3&-5\\3&-2&2&1&3&-4&-1&5&-7&-6&-2&-1&5&-1&5&0&4&5&1&-1&0&1&
      0&2&0&-2&-4&-1&-1&0&1&2&1&3&2&2&-1&2&0&0\\1&-1&1&1&2&-2&0&4&-5&-3&-1&0&3&1&4&0&3&4&1&-1&1&1&-1&2&0&-1&-2&-1&0&1&
      0&1&0&1&1&1&0&0&0&1\\-3&2&-2&-2&-3&5&2&-8&9&7&2&1&-6&0&-7&-1&-5&-7&-3&0&-1&-2&-1&
      -3&0&2&7&0&1&-1&-2&-3&-1&-4&-3&-1&1&-1&1&-2\\-1&-1&-1&0&0&1&1&2&-3&-1&1&-1&2&3&4&1&3&3&0&-1&2&1&-1&1&1&1&0&-1&1&2&-2&0&
      0&1&1&1&-1&-2&1&1\\-1&1&-1&-2&-1&2&0&-5&6&5&1&0&-4&-2&-5&-1&-4&-4&-1&0&-2&-2&0&-2&
      -1&2&5&1&1&-1&-1&-1&1&-3&-3&0&1&0&0&-1\\0&2&1&0&-1&2&1&-2&4&2&1&1&-2&-3&-5&-1&-3&-3&-1&1&-1&-1&0&0&-1&1&2&-1&0&-2&1&0&
      -1&-2&-2&0&1&1&-1&-1\\5&-2&3&2&5&-6&-3&10&-13&-10&-6&1&7&-1&9&0&6&9&5&-1&0&3&1&4&0&
      -5&-11&-1&-3&1&3&4&1&5&4&2&1&3&-2&2\\0&0&-2&-1&0&-1&-1&0&-1&-1&-1&-1&1&1&2&2&2&1&
      -1&0&1&1&2&-1&1&0&-2&1&0&1&0&0&-1&2&2&0&-1&-1&0&1\\-3&2&-2&-1&-3&5&2&-5&6&5&2&1&-5&1&-6&0&-4&-6&-1&1&0&-1&0&-3&-1&3&4&0&0&0&-1&
      -2&-1&-3&-2&-1&1&-1&1&-1\\-3&2&-3&-2&-3&5&2&-6&8&5&2&1&-5&1&-6&0&-4&-6&-3&1&0&-1&0&
      -3&0&3&5&0&1&-1&-1&-2&-2&-3&-2&-1&1&-2&1&-1\\-2&0&-1&-1&-2&2&-1&-4&5&5&2&0&-4&1&-3&0&-3&-3&0&0&0&-2&-1&-1&0&1&4&1&0&0&-2&-2&1&
      -2&-2&-1&-1&-1&1&-1\\0&0&-1&-1&0&0&0&0&0&-1&0&0&0&0&0&0&0&0&-1&0&0&0&0&-1&0&1&0&0&
      0&0&0&0&0&0&0&1&0&0&0&0\\-5&2&-4&-3&-5&7&2&-12&14&11&4&1&-9&1&-10&0&-8&-10&-3&1&-2&
      -3&0&-5&-1&4&10&2&2&-1&-3&-4&-1&-5&-4&-3&1&-3&2&-2\\5&-2&4&3&5&-5&-2&11&-14&-11&-4&1&8&-1&8&0&6&10&5&-1&0&3&0&5&-1&-5&-10&-2&-3&0&3&4&1&
      5&4&3&0&4&-2&2\\2&2&3&1&1&0&0&1&0&-2&-1&3&0&-5&-4&-3&-4&-1&2&0&-3&-1&-1&1&-2&-3&-1&
      -1&-2&-3&3&1&0&-1&-2&1&2&4&-2&-1\\-3&0&-3&-2&-3&4&1&-4&5&4&4&-1&-2&3&-1&2&-1&-3&-3&
      1&2&-1&0&-2&1&4&3&1&2&1&-3&-2&-1&-1&-1&-2&-2&-4&2&-1\\-2&0&-1&-1&-2&1&0&-6&6&6&2&-2&-3&-1&-4&2&-2&-4&-2&0&0&-2&1&-2&0&3&5&2&1&1&-2&-2&0&
      -2&-2&-2&-2&-1&1&-1\\-3&1&-3&-2&-3&5&1&-6&7&6&3&0&-5&2&-5&1&-4&-5&-2&1&0&-2&0&-3&-1&
      4&5&1&1&0&-2&-2&0&-3&-2&-1&0&-2&1&-1\\1&-1&1&1&2&-3&-1&3&-4&-4&-1&0&3&0&3&0&2&3&1&-1&0&1&0&1&0&-2&-2&0&-1&0&1&1&
      0&2&1&1&-1&1&0&1\\0&2&1&0&-1&2&2&-2&3&1&0&1&-2&-3&-5&-2&-3&-3&-1&0&-2&0&0&-1&-1&0&
      2&-1&0&-2&1&0&-1&-2&-2&1&2&2&-1&-1\\0&0&0&0&0&-2&-1&0&1&1&0&-2&-1&-1&-1&0&0&-1&0&
      0&0&0&1&-1&0&0&1&1&0&0&0&0&1&0&0&1&-1&1&0&0\\-3&2&-3&-2&-3&5&2&-5&8&5&2&1&-5&0&-6&0&-5&-6&-1&1&0&-1&1&-3&-1&4&4&0&1&0&-1&
      0&-2&-3&-2&-1&2&-2&1&-1\\1&-1&1&1&2&-3&0&3&-4&-4&-1&0&3&0&3&0&2&3&1&-1&0&1&0&1&0&
      -3&-2&0&0&0&1&1&0&2&1&1&-1&1&0&1\\1&-1&1&1&2&-2&0&3&-4&-4&-1&0&3&0&3&0&2&3&1&-1&0&1&0&1&0&-2&-2&0&0&0&
      1&1&0&2&1&1&0&1&0&1
\end{tabular}
\right).
}
\end{equation*}
\vspace{3mm}
\caption{A generator $g_1$ of ${\rm Aut}(O_{40})$.}
\end{table}
%\end{center}
%\end{equation*}
}
% !TEX root = extremalrank40.tex

\setcounter{MaxMatrixCols}{40}
\renewcommand{\arraystretch}{0.7}
{\scriptsize
\begin{table}[H]
\begin{equation*}
{\tabcolsep=0.30mm
\left(
\begin{tabular}
%{50mm}
{cccccccccccccccccccccccccccccccccccccccc}
1&0&0&0&0&0&0&0&0&0&0&0&0&0&0&0&0&0&0&0&0&0&0&0&0&0&0&0&0&0&0&0&0&
      0&0&0&0&0&0&0\\-2&1&-1&-1&-2&2&1&-6&7&6&2&0&-4&-2&-6&-1&-5&-5&-1&0&-2&-2&0&-2&-1&1&
      6&1&1&-1&-1&-1&0&-3&-3&-1&1&0&1&-1\\0&0&1&0&0&-1&0&1&0&0&0&0&0&0&0&0&0&0&0&0&0&0&0&0&0&0&0&0&0&0&0&0&0&0&0&0&0&0&0&0\\0&0&-1&0&0&1&0&0&0&0&0&0&0&0&0&0&0&0&0&0&0&0&0&0&0&0&-1&0&0&0&0&0&0&0&0&0&0&0&0&0\\-3&1&-2&-1&-2&3&1&-5&6&6&2&0&-5&1&-5&0&-4&-5&0&1&-1&-2&0&-3&-2&3&5&1&1&0&-1&-2&1&
      -3&-2&-1&1&-1&1&-1\\0&0&0&0&0&0&0&1&0&0&0&0&0&0&0&0&0&0&0&0&0&0&0&0&-1&1&0&
      0&0&0&0&0&0&0&0&0&0&0&0&0\\0&0&0&0&0&0&0&0&0&0&0&0&0&0&1&0&0&0&0&0&0&0&
      0&0&1&0&0&0&0&0&0&0&0&0&0&0&0&-1&0&0\\0&0&0&0&0&0&0&0&0&0&0&0&-1&1&0&0&0&0&0&0&0&0&0&0&-1&0&0&0&0&0&0&0&0&0&0&0&0&0&0&0\\0&0&0&0&0&1&1&0&0&0&0&0&0&0&0&0&0&0&0&0&0&0&0&0&0&0&0&0&0&0&0&0&0&
      0&0&0&0&0&0&0\\0&0&0&0&0&0&0&0&0&0&0&0&0&0&0&1&0&0&0&0&0&0&1&0&0&1&0&0&0&0&0&
      0&0&0&0&0&0&0&0&0\\0&0&0&0&0&1&0&0&0&0&0&0&0&0&0&0&0&0&1&0&0&0&0&0&
      0&0&0&0&0&0&0&0&0&0&0&0&0&0&0&0\\0&0&0&0&0&0&0&-1&1&0&0&0&-1&0&-1&0&-1&-1&0&0&0&
      0&1&-1&0&0&0&0&0&0&0&0&0&0&0&0&0&0&0&0\\0&0&0&0&0&0&0&0&0&0&0&0&0&0&0&0&0&0&0&0&0&0&0&0&0&0&0&0&0&0&0&0&1&0&0&0&0&1&0&0\\0&0&0&0&0&-1&0&0&0&0&0&0&0&0&0&0&0&0&0&0&0&0&0&0&0&-1&0&0&0&0&0&0&0&0&0&0&0&1&0&0\\0&0&0&0&0&0&0&0&0&1&0&0&0&0&0&0&0&0&0&0&0&0&1&0&0&1&0&0&0&0&0&0&0&
      0&0&0&0&-1&0&0\\0&0&0&0&0&1&0&0&0&0&1&0&0&0&0&0&0&0&0&0&0&0&0&0&0&1&0&0&0&0&0&
      0&0&0&0&0&0&0&0&0\\0&0&0&0&0&1&0&0&0&0&0&0&0&0&0&0&0&0&0&0&0&0&0&0&
      0&0&0&0&0&0&0&0&0&0&0&0&0&0&0&0\\0&0&0&0&0&0&1&0&0&-1&0&0&1&-1&0&0&-1&0&0&-1&0&
      0&0&0&0&-1&0&0&0&0&0&1&-1&0&0&0&0&0&0&0\\0&0&0&0&0&0&0&0&0&0&0&0&0&0&0&0&0&0&0&0&0&1&0&0&1&-1&0&0&0&0&0&0&0&0&0&0&0&0&0&0\\-2&0&-2&-1&-1&1&0&-3&3&3&1&-1&-2&2&-1&1&-1&-2&-1&0&1&0&1&-2&0&2&2&1&1&1&-2&-1&0&-1&
      0&-1&0&-2&1&0\\0&0&0&0&0&1&0&0&0&0&0&1&0&0&0&0&0&0&0&0&0&0&0&0&0&1&0&0&
      0&0&0&0&0&0&0&0&0&-1&0&0\\0&0&0&0&0&0&0&0&0&0&0&0&0&0&0&0&0&0&0&0&1&0&0&0&1&
      0&0&0&0&0&0&0&0&0&0&0&0&0&0&0\\0&0&0&0&0&0&0&0&0&0&0&0&0&0&0&0&0&0&0&0&0&0&0&0&-1&0&0&0&0&0&0&0&0&0&0&0&-1&0&0&0\\0&0&0&0&0&0&0&0&0&0&0&0&0&0&0&0&1&0&0&1&0&0&0&0&0&1&0&0&0&0&0&0&0&
      0&0&0&0&0&0&0\\2&0&1&1&2&-3&-1&5&-5&-5&-2&0&3&-1&3&0&2&3&2&0&0&2&1&1&0&-2&-5&-1&
      -1&0&2&2&0&2&2&1&1&2&-1&1\\0&0&0&0&0&-1&0&0&0&0&0&0&0&0&0&0&0&0&0&0&0&
      0&0&0&1&-1&0&0&0&0&0&0&0&0&0&0&0&0&0&0\\0&0&0&0&0&-1&-1&-1&1&1&0&0&-1&0&-1&0&-1&-1&0&0&0&0&1&-1&-1&0&0&1&0&0&0&0&0&
      0&0&-1&0&0&0&0\\0&0&0&0&0&-1&0&1&1&0&0&0&0&0&1&-1&0&0&0&0&0&0&0&0&1&0&0&0&1&0&
      0&1&0&0&0&0&1&-1&0&0\\0&0&0&0&0&-1&0&0&0&0&0&0&0&0&0&0&0&0&0&0&0&0&0&
      0&0&0&0&0&0&1&0&0&0&0&0&0&0&0&0&0\\-5&2&-3&-2&-5&5&2&-11&14&11&4&-1&-9&1&-10&0&-7&
      -10&-3&1&0&-3&0&-5&-1&4&10&2&2&0&-3&-4&-1&-5&-4&-3&0&-2&2&-2\\0&0&-1&-1&0&0&0&1&0&-1&0&0&1&0&1&1&0&0&0&0&1&0&1&-1&0&2&-1&0&0&1&0&1&-1&
      1&1&0&0&-1&0&0\\1&0&1&1&1&-2&-1&3&-3&-2&-1&0&2&-1&2&0&1&2&1&0&0&1&1&1&0&-1&-3&0&
      0&0&1&1&0&1&1&0&1&1&-1&1\\0&0&0&0&0&1&0&0&0&0&0&0&0&-1&0&0&0&0&0&0&0&
      0&0&0&0&1&0&0&0&0&0&0&0&0&0&0&0&0&0&0\\-4&1&-2&-1&-3&3&1&-8&9&8&3&-1&-6&1&-6&1&-3&-6&-3&0&0&-1&0&-3&1&2&8&2&2&0&-3&-4&-1&
      -3&-3&-2&-1&-2&2&-1\\5&-2&3&1&4&-5&-2&8&-10&-9&-4&1&7&-2&7&0&4&7&3&0&-1&1&1&3&-1&-2&
      -9&-1&-2&0&3&4&1&4&3&1&1&2&-2&1\\0&0&0&0&0&0&0&0&1&0&0&0&0&0&0&0&0&0&0&0&0&0&0&0&1&0&0&0&0&0&0&0&0&0&0&0&0&0&0&0\\0&0&0&0&0&1&0&0&0&0&0&0&0&0&0&0&1&0&0&0&0&0&0&0&0&1&0&0&0&0&0&0&0&0&0&0&0&0&0&0\\0&0&0&0&0&-1&0&0&0&0&0&0&0&0&0&0&0&0&0&0&0&0&1&0&0&0&0&0&0&0&0&0&0&0&0&0&0&0&0&0\\1&-1&-1&-1&1&-1&0&4&-4&-4&-1&-1&3&1&4&1&3&3&0&0&1&1&1&0&0&2&-3&-1&0&1&0&2&0&2&
      2&2&0&-1&0&1\\1&-1&0&0&1&-2&-1&3&-3&-3&-1&-1&2&1&3&1&3&2&0&0&1&1&0&1&1&-1&-2&0&-1&
      0&0&0&0&2&2&1&-2&0&0&1
\end{tabular}
\right).
}
\end{equation*}
\vspace{-3mm}
\caption{A generator $g_2$ of ${\rm Aut}(O_{40})$.}
\end{table}
%\end{center}
%\end{equation*}
}
\newpage

\end{document}